\numberwithin{equation}{section}
\theoremstyle{plain}
\newtheorem{thm}{Theorem}[section]
\newtheorem{lem}[thm]{Lemma}
\newtheorem{prop}[thm]{Proposition}
\theoremstyle{definition}
\theoremstyle{definition}
\newtheorem{rem}[thm]{Remark}
\let\Im\relax
\DeclareMathOperator{\Im}{Im}
\let\ord\relax
\DeclareMathOperator{\ord}{ord}
\DeclareMathOperator{\del}{\Delta_{\mathrm{hyp},\textit{z}}}
\DeclareMathOperator{\hyp}{\mu_{hyp}} 
\DeclareMathOperator{\PSL}{\mathrm{PSL}_{2}(\mathbb{R})}
\DeclareMathOperator{\PSLZ}{\mathrm{PSL}_{2}(\mathbb{Z})}
\DeclareMathOperator{\can}{\mu_{can}}
\DeclareMathOperator{\shyp}{\mu_{shyp}}
\let\Re\relax
\DeclareMathOperator{\Re}{Re}
\let\id\relax
\DeclareMathOperator{\id}{\mathrm{id}}
\DeclareMathAlphabet{\mathpzc}{OT1}{pzc}{m}{it}
\newcommand*{\rom}[1]{\expandafter\@slowromancap\romannumeral #1@}
\newcommand{\Rmnum}[1]{\expandafter\@slowromancap\romannumeral #1@}
\begin{document}
\makeatletter
\def\imod#1{\allowbreak\mkern10mu({\operator@font mod}\,\,#1)}
\makeatother
\title{Bounds for canonical Green's functions at cusps}
\author{Priyanka Majumder}
\address{Priyanka Majumder, Theoretical Statistics and Mathematics Unit, Indian Statistical Institute, Bangalore Centre, 8th Mile, Mysore Road, 560059 Bangalore, India}
\email{pmpriyanka57@gmail.com}

\author{Anna-Maria von Pippich}
\address{Anna-Maria von Pippich, Fachbereich Mathematik und Statistik, Universit\"at Konstanz, Universit\"atsstra{\ss}e 10, 78464 Konstanz, Germany}
\email{anna.pippich@uni-konstanz.de}

\begin{abstract} 

Let $\Gamma$ denote a cofinite Fuchsian subgroup. In the context of Arakelov theory, the canonical Green's function associated with $\Gamma$ plays a crucial role in establishing asymptotic behavior for Arakelov invariants of the modular curve related to a congruence subgroup of level $N$, where $N$ is a positive integer. More precisely, the canonical Green's functions evaluated at certain cusps contribute to the analytic component of the asymptotic formula for the self-intersection of the relative dualizing sheaf. This article presents a proof demonstrating that the canonical Green's function of a cofinite Fuchsian subgroup, evaluated at cusps, is bounded by the scattering constants, Kronecker's limit functions, and the Selberg zeta function associated with the group $\Gamma$. As an application, we establish an asymptotic expression for the canonical Green's function linked to $\Gamma_0(N)$, where $N$ is any positive integer.

\end{abstract}

\subjclass[2010]{Primary: 30C40, ~Secondary: 14G40}
\keywords{Greens functions,~Modular curves,~Hyperbolic heat kernels}
\maketitle

\section{Introduction}
\subsection{Overview}\label{sec_overview}
Let $\Gamma\subset \PSL$ be a cofinite Fuchsian subgroup acting by fractional linear transformations on the upper half-plane $\mathbb{H}$. By $\mathcal{P}_\Gamma$ resp.~$\mathcal{E}_\Gamma$ we denote a complete set of inequivalent cusps and elliptic fixed points of $\Gamma$, respectively,
and we set $p_\Gamma:= \sharp \mathcal{P}_\Gamma$, $e_\Gamma:= \sharp \mathcal{E}_\Gamma$. 
The quotient space $Y = \Gamma \backslash \mathbb{H}$ has the structure  of non-compact Riemann surface of genus $g_Y$, having 
$p_\Gamma$ cusps and $e_\Gamma$ elliptic fixed points. The compactification $X:=\overline{Y}=Y \cup \mathcal{P}_\Gamma$ inherits the structure of a compact Riemann surface of genus $g_Y$.  
We consider the hyperbolic metric, locally, for $z\in Y\setminus \mathcal{E}_\Gamma$, given by
\begin{align*}
\hyp(z) = \frac{i}{2} \cdot \frac{dz \wedge d\overline{z}}{{\Im(z)}^2}.
\end{align*}

The hyperbolic metric is singular at the cusps and the elliptic fixed points of $Y$.
The volume $\mathrm{vol}_{\mathrm{hyp}}(Y)$ of $Y$ with respect to the hyperbolic metric $\hyp$ is finite and will be denoted by $v_Y$.
The rescaled hyperbolic metric $\shyp(z):=\hyp(z)/v_Y$ measures the volume of $Y$ to be one.
Additionally, on $Y$, we consider the canonical metric
\begin{align*}
\can(z) = \frac{i}{2g_Y} \sum_{j=1}^{g_Y} \left| f_j(z) \right|^2 dz \wedge d\overline{z},
\end{align*}
where $\{ f_1, \dots, f_{g_Y} \}$ denotes an orthonormal basis of the space of cusp forms of weight 2 with respect to $\Gamma$, endowed with the Petersson inner product. 
The canonical metric $\can$ extends smoothly to the cusps and the elliptic fixed points of $Y$ yielding the canonical metric $\can$ on $X$, which is a smooth metric on $X$. 
For $z, w\in X$ with $z\not=w$, the Green's function $\mathcal{G}_{\mathrm{can}}(z,w)$ associated with the metric $\can$ is
given as the unique solution to the differential equation
\begin{align*}
d_z d_z^c  \mathcal{G}_{\mathrm{can}}(z,w) + \delta_w(z) = \can(z),
\end{align*}
where $\delta_w(z)$ is the Dirac delta distribution, with normalization condition
\begin{align*}
\int\limits_X \mathcal{G}_{\mathrm{can}}(z,w)(z,w)\can(z) = 0,
\end{align*}
for all $w\in X$. The function $\mathcal{G}_{\mathrm{can}}(z,w)$ is referred to as the canonical Green's function on $X$; it is a function on $X\times X$ admitting a logarithmic singularity along the diagonal.
Aryasomayajula \cite{A12} showed that the restriction of $\mathcal{G}_{\mathrm{can}}(z,w)$ to $Y\times Y$ coincides with the canonical Green's function on $Y$.\\
In this article, we establish an upper bound for the canonical Green's function $\mathcal{G}_{\mathrm{can}}(z,w)$ evaluated at two distinct cusps of $X$. 
To achieve this, we consider a Green's function better suited to the quotient structure of $Y=\Gamma \backslash \mathbb{H}$, namely the 
(rescaled) hyperbolic Green's function $\mathcal{G}_{\mathrm{hyp}}(z,w)$, which is associated with the rescaled hyperbolic metric $\shyp$; see subsection 2.4 for its precise definition.
Unlike the canonical Green's function, which is smooth at the cusps, the hyperbolic Green's function $\mathcal{G}_{\mathrm{hyp}}(z,w)$ exhibits a log-log-singularity at the cusps.\\
Building upon the work of Jorgenson and Kramer \cite{JK06}, Aryasomayajula \cite{A15Z}, \cite{A12}
expresses the difference $\mathcal{G}_{\mathrm{can}}-\mathcal{G}_{\mathrm{hyp}}$ in terms of integrals involving the hyperbolic heat kernel $K_{\mathrm{hyp}}(t;z,w)$ ($t > 0$, $z, w \in Y$). Relying essentially on these results, we derive an explicit bound for the canonical Green's function evaluated at two distinct cusps. This bound involves spectral and arithmetic quantities of $Y$ such as scattering constants, Kronecker's limit functions, the Selberg zeta constant, and the smallest nonzero eigenvalue of the hyperbolic Laplacian.

As an application, we examine the congruence subgroup $\Gamma_0(N)$ for a positive integer $N$ and refine the bound for the canonical Green's function in this setting.  Using results from \cite{CI00} on scattering constants and a simplified expression for the Kronecker limit function for $\Gamma_0(N)$, we apply the bound on the Selberg zeta function from \cite{JK01}. Finally, by utilizing the spectral bound $\lambda_1 \geq 21/100$ for the smallest nonzero eigenvalue of the Laplacian on congruence subgroups (Luo, Rudnick, Sarnak \cite{LRS95}), we derive asymptotics for the canonical Green's function associated with $\Gamma_0(N)$. 

\subsection{Applications}
Given a smooth algebraic curve defined over a number field, along with its minimal regular model over the corresponding ring of integers, Arakelov introduced a real number, known as the self-intersection of the dualizing sheaf, in \cite{arakelov}. The study of this Arakelov invariant is motivated by its relevance in arithmetic geometry. According to \cite{arakelov}, the Arakelov self-intersection of the dualizing sheaf on a modular curve is defined as the sum of a geometric part and an analytic part, with the analytic part computed using the values of the canonical Green’s function at cusps.

 Considering the congruence subgroup $\Gamma=\Gamma_0(N)$ and $Y_0(N)=\Gamma\backslash \mathbb{H}$ with compactification $X_0(N)=\overline{\Gamma\backslash \mathbb{H}}$, with a positive squarefree integer $N$ with $2, 3 \nmid N$, Abbes--Michel--Ullmo \cite{abbes}, \cite{MU}, proved the following asymptotics:
\begin{align*}
    \bar{\omega}_{\mathcal{X}_0(N)}^2=3 g_{Y_0(N)}\log N+o( g_{Y_0(N)}\log N)\,\ \text{as}\,\ N\to \infty,
\end{align*}
where $\bar{\omega}_{\mathcal{X}_0(N)}^2$ is the self-intersection of the dualizing sheaf of the minimal regular model $\mathcal{X}_0(N)$ over $\mathbb{Z}$ for the modular curve $X_0(N)$. In their case, they proved that the leading term, $3 g_{Y_0(N)}\log N$, is the sum of $g_{Y_0(N)}\log N$ from the geometric part and $2 g_{Y_0(N)}\log N$ from the analytic part. The analytic part, in this case, is exactly given by
\begin{align*}
2g_{Y_0(N)} (1-g_{Y_0(N)}) \, \mathcal{G}_{\mathrm{can}}(0, \infty).
\end{align*}
In the present article we remove the squarefree condition on $N$, and prove that for a positive integer $N$, the leading term in the asymptotics for $2g_{Y_0(N)} (1-g_{Y_0(N)}) \, \mathcal{G}_{\mathrm{can}}(0, \infty)$ is $2 g_{Y_0(N)}\log N$.

For a squarefree natural number $N$ such that $2, 3 \nmid N$, Abbes--Michel--Ullmo \cite{abbes}, \cite{MU}, proved an asymptotic expression for the canonical Green's function on the compactification $X$ of $Y=\Gamma\backslash \mathbb{H}$  by using the following formula:
\begin{align}\label{abbesulmoformula}
\mathcal{G}_{\mathrm{can}}(p_k,p_l)&=4\pi\mathcal{C}_{p_kp_l}+\frac{4\pi}{v_Y}+\lim_{s\to 1}\left(\,\,\int\limits_{{Y}\times {Y}}\,\mathcal{G}_{\mathrm{hyp},\, \hspace{-.03cm} s}(z,w)\can(z)\can(w)-\frac{4\pi}{s(s-1)v_Y}\right)\notag\\
&\phantom{=}- 4\pi\lim_{s\to 1}\left(\,\int\limits_{{Y}} E_{p_k}(z,s)\can(z)+\int\limits_{{Y}} E_{p_l}(w,s)\can(w)-\frac{2}{(s-1)v_Y}\right),
\end{align}
where $p_k,p_l$ are two different cusps of a cofinite Fuchsian subgroup $\Gamma$, and $\mathcal{C}_{p_kp_l}$ is the scattering constant with respect to the cusps $p_k$ and $p_l$. Here, by $\mathcal{G}_{\mathrm{hyp},\, \hspace{-.03cm} s}(z,w)$ we denote the automorphic Green's function, and  by $E_{p_k}(z,s)$ we denote the Eisenstein series corresponding to the cusp $p_k$. 
Using \eqref{abbesulmoformula}, Mayer \cite{Mayer}, investigated the case $X_1(N)$ with an odd squarefree integer $N$ which is divisible by at least two relatively prime numbers bigger than or equal to $4$. In this case, Mayer proved an asymptotic expression for the Arakelov self-intersection of the dualizing sheaf on $X_1(N)$. Recently, Grados--von Pippich \cite{Grados-Pippich} (also see \cite{Grados}), by following the line of proof in \cite{abbes}, proved an asymptotic expression of this quantity for the modular curve $X(N)$ with an odd squarefree positive integer $N$. In \cite{DDC}, Banerjee--Borah--Chaudhari investigated the case of the modular curve $X_0(p^2)$ with a prime number $p$ by following mostly the lines
of proof in \cite{abbes}.

In this article, we propose a different approach to compute the asymptotics of the canonical Green’s function at two different cusps. Instead of using the well-known formula \eqref{abbesulmoformula}, we rely on a formula expressing 
$\mathcal{G}_{\mathrm{can}}-\mathcal{G}_{\mathrm{hyp}}$ in terms of heat kernel integrals, as outlined in subsection \ref{sec_overview}.
This method enables us to avoid the elaborate Rankin–Selberg calculations that are usually necessary in this context.

Additionally, we establish an upper bound for the canonical Green's function applicable to any cofinite Fuchsian subgroup of $\PSL$. Consequently, our result can be employed for various congruence subgroups such as $\Gamma_0(N)$, $\Gamma_1(N)$, or $\Gamma(N)$, where $N$ is a positive integer. These bounds hold potential for applications in arithmetic algebraic geometry. We also wish to highlight recent progress, notably in \cite{DM23},
where the authors have leveraged the results presented in this article to obtain an asymptotic expression for the Arakelov self-intersection number of the relative dualizing sheaf in Edixhoven's minimal regular model for specific modular curves.

\subsection{Main results}
We now state the main results of the article.
\begin{thm}\label{maintheorem1article}
Let $\Gamma$ be a cofinite Fuchsian subgroup and $Y=\Gamma\backslash \mathbb{H}$ with compactification $X$. For two different cusps $p_k$, $p_l$ of $X$, we have
\begin{align*}
\mathcal{G}_{\mathrm{can}}(p_k,p_l)&= 4 \pi\, \mathcal{C}_{p_k p_l}+ \frac{2\pi}{g_Y} \sum_{\substack{j=1 \\ j\not= k}}^{p_\Gamma}\mathcal{C}_{p_k p_j}+\frac{2\pi}{g_Y} \sum_{\substack{j=1 \\ j\not= l}}^{p_\Gamma}\mathcal{C}_{p_l p_j}+\frac{4\pi c_Y}{g_Y v_Y}\\
&+\frac{2\pi}{g_Y}\sum_{j=1}^{e_\Gamma}  \left( 1- \frac{1}{\ord(e_j)}\right) \big(\mathcal{K}_{p_k}(e_j)
\phantom{=}+\mathcal{K}_{p_l}(e_j)\big)+\delta_Y,
\end{align*}
where $\mathcal{C}_{p_k p_l}$ denotes the scattering constant with respect to cusps $p_k$ and $p_l$,  $\mathcal{K}_{p_k}(e_j)$  denotes the Kronecker's limit function with respect to the cusp $p_k$ evaluated at the elliptic fixed point $e_j$ of order $\ord(e_j)$, $c_Y$ denotes a constant related to the Selberg zeta function on $Y$ (see \eqref{selbergzetaconstant}), and the absolute value of $\delta_Y$ is bounded by
\begin{align*}
&\frac{4 \pi}{v_Y g_Y}\sum_{j=1}^{e_\Gamma} \left( 1+ \frac{1}{\ord(e_j)}\right)+\frac{4\log 2}{v_Y g_Y}\sum_{j=1}^{e_\Gamma}\big( {\ord(e_j)}+1\big)+\frac{4\pi(d_Y+1)^2}{\lambda_1 v_Y}\\&+\frac{4\pi}{v_Y}+\frac{2\log(4 \pi)}{g_Y}
 +\frac{2 p_\Gamma}{g_Y v_Y}\bigg(\pi+\frac{4\pi^2}{3}+1\bigg).
\end{align*}
Here, 
by $\lambda_1$ we denote the smallest non-zero eigenvalue of the hyperbolic Laplacian $\del$ and $\displaystyle d_{Y}=\sup_{z\in Y}\left|\frac{\can(z)}{\shyp(z)}\right|$. 
\end{thm}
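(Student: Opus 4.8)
The plan is to follow the Jorgenson--Kramer strategy for comparing the two Green's functions, evaluate at pairs of cusps, and then invoke Anilatmaja's bounds to control the error terms. First I would write the identity
\begin{align*}
\mathcal{G}_{\mathrm{can}}(z,w)-\mathcal{G}_{\mathrm{hyp}}(z,w)
=\int_X \mathcal{G}_{\mathrm{hyp}}(z,\zeta)\,\can(\zeta)
+\int_X \mathcal{G}_{\mathrm{hyp}}(w,\zeta)\,\can(\zeta)
-\int_{X\times X}\mathcal{G}_{\mathrm{hyp}}(\zeta,\xi)\,\can(\zeta)\can(\xi),
\end{align*}
which follows from the defining differential equation together with $d_zd_z^c$ applied to the Green's function for $\can$ and for $\hyp$ and the fact that $\can$ has total mass $1$. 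Since $\mathcal{G}_{\mathrm{hyp}}(p_k,p_l)=4\pi\,\mathcal{C}_{p_kp_l}$ (the regularized value of the hyperbolic Green's function at two cusps equals $4\pi$ times the scattering constant), substituting $z=p_k$, $w=p_l$ reduces the problem to estimating the three integrals on the right. The cross-term $\int_{X\times X}\mathcal{G}_{\mathrm{hyp}}\,\can\can$ is a constant independent of $p_k,p_l$; following \cite{A15Z} I expect this to contribute a term of size $O(1/v_\Gamma)$ together with the Selberg zeta constant $c_X$ divided by $g_\Gamma v_\Gamma$, and it gets absorbed partly into the main term $4\pi c_X/(g_\Gamma v_\Gamma)$ and partly into $\delta_X$.

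The main work is the single integral $\int_X \mathcal{G}_{\mathrm{hyp}}(p_k,\zeta)\,\can(\zeta)$. Here I would replace $\can$ by $\shyp$ (the scaled hyperbolic metric, differing from $\hyp$ by the factor related to $v_\Gamma$ and the elliptic contributions) at the cost of the error $d_X=\sup|\can/\shyp|$, and then use the spectral/heat-kernel expansion of $\mathcal{G}_{\mathrm{hyp}}$ from \cite{JK06,A15Z}. The key structural input is that $\int_X \mathcal{G}_{\mathrm{hyp}}(p_k,\zeta)\,\hyp(\zeta)$ can be expressed through the Kronecker limit function $\mathcal{K}_{p_k}$ evaluated at the elliptic fixed points $e_j$ (with weights $1-1/\ord(e_j)$) plus a sum of scattering constants $\mathcal{C}_{p_kp_j}$ over the other cusps $p_j$; this is precisely where the terms $\frac{2\pi}{g_\Gamma}\sum_{j\neq k}\mathcal{C}_{p_kp_j}$ and $\frac{2\pi}{g_\Gamma}\sum_j(1-1/\ord(e_j))\mathcal{K}_{p_k}(e_j)$ originate, and the same for $p_l$. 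The residual spectral tail is controlled by the bound $4\pi(d_X+1)^2/(\lambda_1 v_\Gamma)$, coming from estimating $\int_0^\infty\!\int_X(K_{\mathrm{hyp}}(t;p_k,\zeta)-1/v_\Gamma)\,dt$ against the spectral gap $\lambda_1$, exactly as in Anilatmaja's argument.

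The remaining explicit constants in the bound for $|\delta_X|$ — the terms $\frac{4\log 2}{v_\Gamma g_\Gamma}\sum_j(\ord(e_j)+1)$, $\frac{2\log(4\pi)}{g_\Gamma}$, $\frac{2\pi}{g_\Gamma v_\Gamma}(\pi+\tfrac{4\pi^2}{3}+1)$, and the $\frac{4\pi}{v_\Gamma g_\Gamma}\sum_j(1+1/\ord(e_j))$ term — I expect to come from the local analysis near the elliptic fixed points (where $\shyp$ and $\hyp$ genuinely differ and one must bound the Green's function against the explicit elliptic-cone model) and from the normalization constants in passing between $\mathcal{G}_{\mathrm{hyp}}$, its $s$-regularization, and the Kronecker limit formula (the $\log(4\pi)$ is the standard constant in the Kronecker limit formula). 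I would assemble these by a triangle-inequality bookkeeping, being careful that each error is uniform in the cusps. The hardest step is the second one: getting the exact identification of the principal part of $\int_X\mathcal{G}_{\mathrm{hyp}}(p_k,\zeta)\,\hyp(\zeta)$ in terms of scattering constants and Kronecker limit functions, rather than merely bounding it, since that is what produces the explicit main terms of the theorem; here I would lean directly on the formulas of \cite{A15Z} and the classical Kronecker limit formula for Eisenstein series, checking that the elliptic weights $1-1/\ord(e_j)$ enter with the right sign and normalization.
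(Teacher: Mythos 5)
Your overall framework (the Jorgenson--Kramer decomposition $\mathcal{G}_{\mathrm{hyp}}-\mathcal{G}_{\mathrm{can}}=\phi(z)+\phi(w)$, evaluation at the cusps, spectral-gap control of the double integral) is the right one, but the central step is misidentified. You write that the main terms come from expressing $\int_X\mathcal{G}_{\mathrm{hyp}}(p_k,\zeta)\,\hyp(\zeta)$ through Kronecker limit functions and scattering constants; that integral is identically zero by the normalization $\int_X\mathcal{G}_{\mathrm{hyp}}(z,\zeta)\hyp(\zeta)=0$, so it cannot be the source of anything. Likewise, replacing $\can$ by $\shyp$ ``at cost $d_X$'' does not produce the explicit main terms: the actual mechanism is the Jorgenson--Kramer identity $g_\Gamma\can=\bigl(\tfrac{1}{4\pi}+\tfrac{1}{v_\Gamma}\bigr)\hyp+\tfrac12\bigl(\int_0^\infty\del K_{\mathrm{hyp}}(t;\cdot)\,dt\bigr)\hyp$, which (after the normalization kills the $\hyp$ part) converts $\int_X\mathcal{G}_{\mathrm{hyp}}(z,\zeta)\can(\zeta)$ into an integral against $F(\zeta)=\int_0^\infty\del K_{\mathrm{hyp}}(t;\zeta)\,dt$. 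One then splits $F$ into elliptic, parabolic and hyperbolic pieces and applies the self-adjointness lemma $\int_X\mathcal{G}_{\mathrm{hyp}}(z,\zeta)(\del f)(\zeta)\hyp(\zeta)=4\pi f(z)-4\pi\int f\,\shyp+\sum_s 2\pi c_s\,\mathcal{G}_{\mathrm{hyp}}(s,z)$. The scattering sums $\sum_{j\neq k}\mathcal{C}_{p_kp_j}$ come from the parabolic piece; the terms $\mathcal{K}_{p_k}(e_j)$ come from the singularities of the elliptic piece at the $e_j$, which produce $\mathcal{G}_{\mathrm{hyp}}(z,e_j)$ and hence $4\pi\mathcal{K}_{p_k}(e_j)$ in the limit $z\to p_k$; the Selberg zeta constant comes from $\int\mathrm{H}_\Gamma\,\shyp$. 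None of this is recoverable from your outline, and the $d_X$ only enters in bounding the double integral $\mathrm{C}_{\mathrm{hyp}}$.

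A second genuine gap: you substitute $z=p_k$, $w=p_l$ into an identity each of whose individual terms diverges. Both $\mathcal{G}_{\mathrm{hyp}}(z,w)$ and $\int_X\mathcal{G}_{\mathrm{hyp}}(z,\zeta)\can(\zeta)$ blow up like $-\tfrac{4\pi}{v_\Gamma}\log\Im(\sigma_{p_k}^{-1}z)$ as $z\to p_k$, and there are further logarithmic divergences from the parabolic integral, from $\mathrm{H}_\Gamma(z)$, and from $\sum_j(1-1/\mathrm{ord}(e_j))\,\mathcal{G}_{\mathrm{hyp}}(z,e_j)$. These cancel only after invoking the Gauss--Bonnet volume formula $\tfrac{v_\Gamma}{2\pi}=2g_\Gamma-2+p_\Gamma+\sum_j(1-1/\mathrm{ord}(e_j))$; asserting that ``the regularized value of $\mathcal{G}_{\mathrm{hyp}}$ at two cusps is $4\pi\mathcal{C}_{p_kp_l}$'' and regularizing each term separately is not legitimate without verifying this cancellation, which is where the correctness of the finite constants in $\delta_X$ (and indeed the finiteness of the answer) actually lives.
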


We use Theorem \ref{maintheorem1article} to establish the following result for the special case of the modular curves $X_0(N)$.

\begin{thm}\label{maintheorem2article}
Let $\Gamma=\Gamma_0(N)$ and $Y=\Gamma_0(N)\backslash \mathbb{H}$ with compactification $X_0(N)=\overline{\Gamma\backslash \mathbb{H}}$, where $N$ is a positive integer. 
Then, for the cusps $0$ and $\infty$ of $X_0(N)$, we have the following asymptotic expression
\begin{align*}
2g_Y(1-g_Y)\, \mathcal{G}_{\mathrm{can}}(0, \infty)=2g_Y \log N + o(g_Y \log N) \,\ \text{as}\,\ N \to \infty,
\end{align*}
where $g_Y$ denotes the genus of $Y$.
\end{thm}

\subsection{Outline of the article}
The paper is organized as follows. In section \ref{BM}, we recall and summarize basic notation and definitions used in this article.
In section \ref{section5}, we prove Theorem \ref{maintheorem1article}. Then as an application we consider the congruence subgroup $\Gamma_0(N)$. In section \ref{BSCGN}, we prove asymptotic bounds for scattering constants of $\Gamma_0(N)$. In section \ref{BKLFGN}, we prove asymptotic bounds for Kronecker's limit functions of $\Gamma_0(N)$. Finally, in section \ref{section8}, we prove Theorem \ref{maintheorem2article}.
\subsection{Acknowledgements}The authors would like to acknowledge support from the LOEWE research unit ``Uniformized structures in Arithmetic and Geometry" of Technical University of Darmstadt and Goethe University of Frankfurt. The
second named author would also like to acknowledge support from Indian Statistical Institute Bangalore. The authors would also like to thank Anilatmaja Aryasomayajula, Jan Hendrik Bruinier, and J\"urg Kramer for their valuable contributions and insightful mathematical discussions.
They also thank the anonymous referee for thoughtful comments and suggestions that helped to improve the clarity and precision of the presentation.

\section{Background material}\label{BM}
\subsection{Basic notation}

As mentioned in the introduction, we let $\Gamma \subset \PSL$ denote a
cofinite Fuchsian subgroup acting by fractional linear transformations on the hyperbolic
upper half-plane $\mathbb{H}:= \lbrace z=x+iy \in \mathbb{C} \mid x, y \in \mathbb{R}; y>0 \rbrace$. The quotient space $Y:= \Gamma \backslash \mathbb{H}$ admits the structure of a finite volume hyperbolic Riemann surface of genus $g_Y$. \\
By $\mathcal{P}_\Gamma$ resp.~$\mathcal{E}_\Gamma$ we denote a complete set of inequivalent cusps and elliptic fixed points of $\Gamma$, respectively,
and we set $p_\Gamma:= \sharp \mathcal{P}_\Gamma$, $e_\Gamma:= \sharp \mathcal{E}_\Gamma$, that is, we assume that $Y$ has $e_\Gamma$ elliptic fixed points
and $p_\Gamma$ cusps. 
The stabilizer group $\Gamma_{e_j}:= \lbrace \gamma \in \Gamma \mid \gamma e_j= e_j \rbrace$ of the elliptic fixed point $e_j\in\mathcal{E}_\Gamma$ ($j=1, \ldots, e_\Gamma$) is a finite cyclic group of order $\ord(e_j)$, where $\ord(e_j)\in \mathbb{N}_{\geq 2}$ denotes the order of $e_j$. The stabilizer group $\Gamma_{p_j}:= \lbrace \gamma \in \Gamma \mid \gamma p_j= p_j \rbrace$ of a cusps $p_j\in\mathcal{P}_\Gamma$ ($j=1, \ldots, p_\Gamma$) is an infinite group satisfying $\Gamma_{p_j}\simeq\mathbb{Z}$.
By $\mathcal {H}_{\Gamma}$ we denote a complete set of representatives of inconjugate, primitive, hyperbolic elements of $\Gamma$. 
By $X$ we denote the compactified Riemann surface $\overline{Y}=Y\cup \mathcal{P}_{\Gamma}$, obtained from $Y$ by adding
the $p_{\Gamma}$ cusps of $Y$; the elements of $\mathcal{P}_{\Gamma}$ are also called cusps of $X$.\\
We identify $Y$ locally with its universal cover $\mathbb{H}$. By $ds_{\mathrm{hyp}}^2$  we denote the
hyperbolic line element and by $\hyp$ the hyperbolic metric on $Y$. For $z=x+iy\in Y$, we have
\begin{align*}
    ds_{\mathrm{hyp}}^2(z)=\frac{dz\cdot d\overline{z}}{\Im(z)^2}=\frac{dx^2+dy^2}{y^2}, \quad \hyp(z)=\frac{i}{2}\cdot\frac{dz\wedge d\overline{z}}{{\Im(z)}^{2}}=\frac{dx dy}{y^{2}}.
\end{align*}
Since $\Gamma$ is cofinite, the hyperbolic volume $$v_Y:=\int\limits_Y \hyp$$ of $Y$ is finite. By $\mathcal{F}_\Gamma$ we denote a
fundamental domain for $\Gamma$, which is a connected domain of $\mathbb{H}$ which represents 
$Y$. Then we have
\begin{align*}
v_Y=\int\limits\limits_{\mathcal{F}_\Gamma}\hyp(z).
\end{align*}
We defined the rescaled hyperbolic metric as
\begin{align*}
    \shyp:= \frac{\hyp}{v_Y}.
\end{align*}
The hyperbolic Laplacian $\del$ on $Y$ is given, for $z = x + iy \in Y$, by
\begin{align*}
    \del:= -y^{2}\bigg(\frac{\partial^{2}}{\partial x^{2}} +
 \frac{\partial^{2}}{\partial y^{2}}\bigg).
\end{align*}
We recall that, for any smooth function $f$ on $Y$, we have (see, e.g., \cite{Lang88}, p.~10) 
\begin{align*}
\del f(z) \hyp(z)= -4\pi d_z d_z^cf(z)
\end{align*}
with the differential operators $d_z:=\left(\partial_z + \overline{\partial}_z \right)$ and $d_z^{c}:= \left( \partial_z - \overline{\partial}_z
\right)\slash 4\pi i$ on $Y$.
By $d_{\mathrm{hyp}}(z,w)$ we denote the hyperbolic distance between $z, w\in\mathbb{H}$ derived from $ds^{2}_{\mathrm{hyp}}$. Then, we have following relation (see, e.g., \cite{Bea}, Theorem 7.2.1)
\begin{align}\label{cosh-hypdist}
\cosh\bigl(d_{\mathrm{hyp}}(z,w)\bigr)= 1+2 u(z,w)
\end{align}
with the point-pair invariant
\begin{align}\label{defn_u}
u(z,w):=\frac{\left|z-w\right|^{2}}{4\,\mathrm{Im}(z)\mathrm{Im}(w)}\,.
\end{align}
Finally, we recall the following well-known formula (see, e.g., \cite{Sh}, Theorem 2.20)
\begin{align}\label{volumegenusformula}
\frac{v_Y}{2\pi}=2 g_Y-2+p_\Gamma+\sum_{j=1}^{e_\Gamma}\Big(1-\frac{1}{\ord(e_j)}\Big).
\end{align}

\subsection{Kronecker's limit functions and scattering constants }\label{Eis}
In this subsection we recall some results for the classical non-holomorphic parabolic Eisenstein series.
For more details, we refer the reader to the literature, e.g., \cite{hej}, \cite{Kub}, or \cite{I02}.
\\
For $z\in\mathbb{H}$ and $s\in\mathbb{C}$ with $\Re(s)> 1$, the Eisenstein 
series $E_{p_j }(z,s)$ associated to the cusp $p_j\in\mathcal{P}_\Gamma$ is defined 
by the series
\begin{equation}\label{Eisensteinseries}
{E}_{p_j}(z,s) = \sum_{\gamma \in \Gamma_{p_j}\backslash \Gamma}
\Im(\sigma_{p_j}^{-1}\gamma z)^{s},
\end{equation}
where $\sigma_{p_j}\in\mathrm{PSL}_2(\mathbb{R})$ is a scaling matrix of the cusp $p_j$, i.e., it satisfies $$\sigma_{\small{p_j}}\infty = p_j \quad\text{ and }\quad
\sigma_{\small{p_j}}^{-1}\Gamma_{p_j}\sigma_{\small{p_j}}= \langle\left(\begin{smallmatrix} 1 & 1\\ 0 & 1  \end{smallmatrix}\right)\rangle.$$
Note that this definition does not depend on the choice of the representative for the cusp $p_j$, nor on the choice of the scaling matrices. 
The series \eqref{Eisensteinseries} converges absolutely and locally uniformly for any $z\in\mathbb{H}$ and $s\in\mathbb{C}$ 
with $\Re(s)>1$. Thus, it is a holomorphic function for $s\in\mathbb{C}$ 
with $\Re(s)>1$, and it is invariant under the action of $\Gamma$.
Moreover, it
satisfies the differential equation
\begin{align*}
\big(\del -s(1-s)\big){E}_{p_j}(z,s)=0.
\end{align*}
The Eisenstein series $E_{p_j }(z,s)$ admits a meromorphic continuation to all $s\in\mathbb{C}$ with a simple pole 
at $s = 1$ with residue ${1}\slash {v_Y}$. 
For $z\in\mathbb{H}$, the Kronecker's limit function 
$\mathcal{K}_{p_j}(z)$ associated to the cusp $p_j\in\mathcal{P}_{\Gamma}$ is then defined by
\begin{align}\label{KLFdefn}
\mathcal{K}_{p_j}(z)= \lim_{s \to 1}\bigg({E}_{p_j }(z,s)- \frac{1}{(s-1)v_Y}\bigg).
\end{align}
For example, for $\Gamma=\PSLZ$ and $Y=\PSLZ\backslash \mathbb{H}$, we can choose $P_{\Gamma}=\{p_1\}=\{\infty\}$, $\sigma_{\infty}=\mathrm{id}$, and we have $v_Y=\pi/3$. The Kronecker's limit formula
then states that 
\begin{align*}
&\mathcal{K}_\infty(z)=-\frac{1}{2\pi}\log(|\Delta(z)|\Im(z)^{6} )+ \mathcal{C},
\end{align*}
with
\begin{align}\label{scatteringconstantPSLZ}
\mathcal{C}:=\frac{6}{\pi}\left(1-\log (4\pi)-12 \zeta'(-1)\right).
\end{align}
Here, $\Delta(z)$ denotes the classical modular discriminant, a modular form of weight $12$ for 
$\PSLZ$, with Fourier expansion of the form
\begin{align*}
\Delta(z)&= \sum_{n=1}^\infty \tau(n)e^{2\pi i n z},
\end{align*} 
where $\tau(n)$ denotes Ramanujan's tau function. 

\vspace{.1cm}\noindent
Let $p_k,p_l\in \mathcal{P}_\Gamma$ be cusps. Then, for $z\in\mathbb{H}$ and $s\in\mathbb{C}$ with $\Re(s)> 1$, the Fourier expansion
of $E_{p_k }(z,s)$ with respect to the cusp $p_l$ is given by
\begin{align}\label{FourierEisenstein}
{E}_{p_k}(\sigma_{p_l}z,s)
&= \delta_{p_kp_l}\Im(z)^{s} + \varphi_{p_kp_l}(s)\Im(z)^{1-s}\notag\\
&\phantom{=}+ \sum_{n\not=0}\varphi_{p_kp_l}(n,s)\Im(z)^{1/2}K_{s-1/2}(2\pi|n|\Im(z))\,e^{2\pi i n\Re(z)}.
\end{align}
Here, by $\delta_{p_kp_l}$ we denote the Kronecker delta symbol and 
$K_{\mu}(z)$ denotes the modified Bessel function of the second kind (see, e.g., \cite{I02}, Appendix B.4), and we have set
\begin{align}
\varphi_{p_kp_l}(s)&:= \sqrt{\pi}\, \frac{\Gamma(s-1\slash 2)}{\Gamma(s)}\sum_{c>0} {c^{-2s}}\,\mathcal{S}_{p_kp_l}(0,0;c),\label{def_scatteringfct} \\
\varphi_{p_kp_l}(n,s)&:=\frac{2\pi^s}{\Gamma(s)}|n|^{s-1/2}\sum_{c>0} {c^{-2s}}\,\mathcal{S}_{p_kp_l}(0,n;c).\label{phin}
\end{align}
The function $\varphi_{p_kp_l}(s)$, defined in \eqref{def_scatteringfct}, is called scattering function of $\Gamma$ at the cusps $p_k$ and $p_l$. Note that the definition of $\varphi_{p_kp_l}(s)$ does not depend on the choice of the representatives for the cusps $p_k$ and $p_l$. The scattering function $\varphi_{p_kp_l}(s)$ is holomorphic for $s\in\mathbb{C}$ with $\mathrm{Re}(s)>1$, and admits a meromorphic continuation to the whole complex $s$-plane. At $s=1$ there is always a simple pole of $\varphi_{p_kp_l}(s)$ with residue equal to ${1}\slash {v_Y}$. Furthermore, we have $\varphi_{p_kp_l}(s)=\varphi_{p_lp_k}(s)$.

The scattering constant $\mathcal{C}_{p_kp_l}$ of $\Gamma$ at the cusps $p_k, p_l$ is defined by
\begin{align}\label{scatteringconstantdef}
\mathcal{C}_{p_kp_l}:=\lim_{s\to 1}\left(\varphi_{p_kp_l}(s)-\frac{1}{(s-1)v_Y}\right).
\end{align}
By the definition, $\mathcal{C}_{p_kp_l}$ also does not depend on the choice of the representatives for the cusps $p_k$ and $p_l$, and we note the identity
$$\mathcal{C}_{p_kp_l}=\mathcal{C}_{p_lp_k}.$$ 
The function $\varphi_{p_kp_l}(n,s)$, defined in \eqref{phin}, is holomorphic for $s\in\mathbb{C}$ with $\mathrm{Re}(s)>1$, it admits a meromorphic continuation to the whole complex $s$-plane
and it is holomorphic at $s=1$.

For $\Gamma=\PSLZ$, the scattering function $\varphi(s):=\varphi_{\infty\infty}(s)$ 
and the function $\varphi(n,s):=\varphi_{\infty\infty}(n,s)$ are 
explicitly given by 
\begin{align*}
\varphi(s)&=\sqrt{\pi} \,\frac{\Gamma(s-\frac{1}{2})}{\Gamma(s)} \frac{\zeta(2s-1)}{\zeta(2s)},\\
 \varphi(n,s)&=\frac{2\pi^s}{\Gamma(s)\zeta(2s)}|n|^{s-1/2}
 \sum_{\substack{d|n\\d>0}}d^{1-2s},
\end{align*}
and the scattering constant $\mathcal{C}_{\infty\infty}$ is given by the constant arising in \eqref{scatteringconstantPSLZ}, namely
\begin{align*}
\mathcal{C}_{\infty\infty}=\mathcal{C}=\frac{6}{\pi}\left(1-\log (4\pi)-12 \zeta'(-1)\right).
\end{align*}
\subsection{Hyperbolic heat kernels and Selberg zeta constant}
The hyperbolic heat kernel $K_{\mathbb{H}}(t;z,w)$ for $t\in\mathbb{R}_{> 0}$ and $z, w\in \mathbb{H}$ is given by the following formula (see, e.g., \cite{Ch}, p.~246)
\begin{equation*}
K_{\mathbb{H}}(t;z,w)= \frac{\sqrt{2}e^{- t\slash 4}}{(4\pi t)^{3\slash 2}}
\int\limits_{d_{\mathrm{hyp}}(z,w)}^{\infty}\frac{re^{-r^{2}\slash 4t}}{\sqrt{\cosh(r)-\cosh (d_{\mathrm{hyp}}(z,w))}}\,dr.
\end{equation*}
Note that the hyperbolic heat kernel $K_{\mathbb{H}}(t;z,w)$ only depends on the {hyperbolic distance} $d_{\mathrm{hyp}}(z,w)$. If $z=w$, i.e., if $d_{\mathrm{hyp}}(z,w)=0$, we have
\begin{align*}
K_{\mathbb{H}}(t;z,z)= \frac{1}{2\pi}\int\limits\limits_0^\infty e^{-(r^2+ 1\slash 4)t}r \tanh (\pi r) \,dr.
\end{align*}
The hyperbolic heat kernel $K_{\mathrm{hyp}}(t;z,w)$ on $Y$ is a function of $t\in\mathbb{R}_{> 0}$ and $z, w\in Y$ and can be defined by averaging over the group $\Gamma$, namely
\begin{equation*}
 K_{\mathrm{hyp}}(t;z,w)=\sum_{\gamma\in\Gamma}K_{\mathbb{H}}(t;z,\gamma w).
\end{equation*}
For notational convenience we set $$K_{\mathrm{hyp}}(t;z):=K_{\mathrm{hyp}}(t;z,z).$$
The hyperbolic heat kernel $K_{\mathrm{hyp}}(t;z,w)$ satisfies the following heat equation
\begin{align*} 
&\bigg(\del + \frac{\partial}{\partial t}\bigg)K_{\mathrm{hyp}}
(t;z,w) =0\qquad (w\in Y).
\end{align*}
For any $C^\infty$-function $f(w)$ on $Y$, the hyperbolic heat kernel satisfies the relation
\begin{align*}
& \lim_{t \to 0}\int\limits\limits_Y K_{\mathrm{hyp}}(t;z,w) f(w) \hyp(w)= f(z) \qquad (z\in Y).
\end{align*}
Moreover, for $t\in\mathbb{R}_{> 0}$ and $w\in \mathbb{H}$, we have
\begin{align*}
\int\limits_Y K_{\mathrm{hyp}}(t;z,w) \hyp(z)=1.
\end{align*}
The hyperbolic heat kernel admits the following spectral expansion (see, e.g., \cite{Ch}, pp.~108--112)
\begin{align*}
&K_{\mathrm{hyp}}(t;z,w) = \sum_{n=0}^{\infty}e^{-\lambda_{n}t} \varphi_{n}(z)\varphi_{n}(w)+
\frac{1}{2\pi}\sum_{j=1}^{p_\Gamma} \int\limits_{0}^{\infty}e^{-(r^{2}+ 1\slash4)t}{E}_{p_j}(z,1
\slash2+ir){E}_{p_j}(w,1\slash 2-ir)dr
\end{align*}
in terms of an orthonormal basis $\lbrace\varphi_n(z)\rbrace_{n=0}^{\infty}$ of eigenfuntions associated to the discrete spectrum $\lbrace\lambda_n\rbrace_{n=1}^\infty$ of $\del$ and in terms of the eigenfunctions $\lbrace E_{p_j}(z,1\slash 2+ir)\rbrace_{j=1}^{p_\Gamma}$ associated to the continuous spectrum $\{1\slash 4+r^2 \mid r\in\mathbb{R}\}$ of $\del$.

\vspace{.1cm}\noindent
To define the Selberg zeta constant we first recall that, for $s \in \mathbb{C}$ with $\Re(s)>1$, the Selberg zeta function $Z_{Y}(s)$ on $Y$ is defined via the Euler product expansion 
\begin{align*}
Z_{Y}(s)= \prod_{\gamma \in \mathcal{H}_\Gamma}Z_\gamma(s),
\end{align*}
where the local factors $Z_\gamma(s)$ are given by
\begin{align*}
Z_\gamma(s)= \prod_{n=0}^\infty\big( 1- e^{-(s+n)\ell_\gamma}\big)
\end{align*}
with $\ell_\gamma$ denoting the hyperbolic length of the closed geodesic determined by $\gamma \in \mathcal{H}_\Gamma$. 
The Selberg zeta function $Z_{Y}(s)$ admits a meromorphic continuation to all $s\in \mathbb{C}$ with zeros and poles characterized by the spectral theory of the hyperbolic Laplacian (see, e.g., \cite{hej}, Theorem 5.3). For our purpose, it suffices to know that the logarithmic derivative of $Z_{Y}(s)$ has a simple pole at $s=1$. We call the constant
\begin{align}\label{selbergzetaconstant}
c_Y:= \lim_{s\to 1}\bigg(\frac{Z'_{Y}(s)}{Z_{Y}(s)}-\frac{1}{s-1}\bigg)
\end{align}
the Selberg zeta constant of $Y$. 
It can be expressed in terms of the hyperbolic heat trace $\mathrm{H}\mathrm{Tr}K_{\mathrm{hyp}}(t)$, which is a well-defined function for $t\in \mathbb{R}_{\geq 0}$ defined by 
\begin{equation*}
 \mathrm{H}\mathrm{Tr}K_{\mathrm{hyp}}(t):= \int\limits_{Y}\mathrm{H}K_{\mathrm{hyp}}(t;z)\hyp(z),
\end{equation*} 
where
\begin{align}\label{heatkernelhyperbplic}
\mathrm{H}K_{\mathrm{hyp}}(t;z)=\sum_{\substack{\gamma \in \Gamma\\\gamma\, \mathrm{hyperbolic}}}K_{\mathbb{H}}(t;z,\gamma z).
\end{align}
Then, we have the relation (see \cite{JK01})
\begin{align*}
c_Y-1=\int\limits_0^\infty (\mathrm{H}\mathrm{Tr}K_{\mathrm{hyp}}(t)-1)\,dt.
\end{align*}
In \cite{JK01} upper and lower bounds for the constant $c_Y$ are proven, e.g., see Remark \ref{r2}.

\subsection{Hyperbolic and canonical Green's functions}\label{sec_hypandcangreens}

For $z, w\in \mathbb{H}$ with $z\not=w$ and $s\in\mathbb{C}$ with $\mathrm{Re}(s)>0$, we define
\begin{equation*} 
\mathcal{G}_{\mathbb{H},s}(z,w)= \dfrac{\Gamma(s)^{2}}{\Gamma(2s)}u(z,w)^{-s}
F\left(s,s;2s;-u(z,w)^{-1}\right),
\end{equation*} 
where $u(z,w)$ is the point-pair invariant given in \eqref{defn_u}, and $F(s,s;2s;Z)$ is the Gaussian hypergeometric function. Letting $s=1$, we obtain the Green's function $\mathcal{G}_{\mathbb{H}}(z,w):=\mathcal{G}_{\mathbb{H},1}(z,w)$ on $\mathbb{H}$.
Using the well-known formula $F(1,1;2;-Z)=\log(Z+1)/Z$, for $z, w\in \mathbb{H}$ with $z\not=w$, we get 
\begin{align}\label{greenH}
\mathcal{G}_{\mathbb{H}}(z,w)=\log\left(1+u(z,w)^{-1}\right)=
-\log \bigg|\frac{z-{w}}{z-\overline{w}}\bigg|^2.
\end{align}
The Green's function on $\mathbb{H}$ is related to the hyperbolic heat kernel on $\mathbb{H}$ through the following formula 
\begin{align}\label{heatkernelandgreen}
\mathcal{G}_{\mathbb{H}}(z,w)= 4 \pi \int\limits_0^\infty K_{\mathbb{H}}(t;z,w)\, dt,
\end{align}
where $z, w\in \mathbb{H}$ with $z\not=w$.

For $z, w \in  Y$ with $z\not=w$ and $s\in \mathbb{C}$ with $\Re(s)>1$, the automorphic Green's function $\mathcal{G}_{\mathrm{hyp},\, s}(z,w)$ is defined by
\begin{align*}
\mathcal{G}_{\mathrm{hyp},\, \hspace{-.03cm} s}(z,w)= \sum_{\gamma \in \Gamma}\mathcal{G}_{\mathbb{H},\, \hspace{-.03cm} s}(z,\gamma w).
\end{align*}
The automorphic Green's function $\mathcal{G}_{\mathrm{hyp},\, \hspace{-.03cm} s}(z,w)$ is holomorphic for $s\in \mathbb{C}$ with $\Re(s)>1$, and admits a meromorphic continuation to all $s\in \mathbb{C}$ with a simple pole at $s=1$ with the residue $4\pi \slash v_Y$.

For $z, w\in Y$ with $z\not=w$, the (rescaled) hyperbolic Green's function $\mathcal{G}_{\mathrm{hyp}}(z,w)$, associated with the rescaled hyperbolic metric $\shyp$, equals the constant term in the Laurent expansion of $\mathcal{G}_{\mathrm{hyp},\, s}(z,w)$ at $s=1$, i.e.,
\begin{align}\label{laurentexpanionautogreen}
\mathcal{G}_{\mathrm{hyp}}(z,w)= \lim_{s \to 1}\bigg(\mathcal{G}_{\mathrm{hyp},\, \hspace{-.03cm} s}(z,w)- \frac{4\pi}{s(s-1)v_Y}\bigg).
\end{align}
For $z, w\in Y$ with $z\not=w$, the hyperbolic Green's function is related to the hyperbolic heat kernel through the following formula
\begin{align*}
\mathcal{G}_{\mathrm{hyp}}(z,w)= 4\pi \int\limits_0^\infty\bigg( K_{\mathrm{hyp}}(t;z,w)-\frac{1}{v_Y}\bigg) dt.
\end{align*}
One can show that the hyperbolic Green's function $\mathcal{G}_{\mathrm{hyp}}(z,w)$ satisfies the differential equation
\begin{align*}
d_z d_z^c \, \mathcal{G}_{\mathrm{hyp}}(z,w)+ \delta_w(z)=\shyp(z),
\end{align*}
where $d_z=\left(\partial_z + \overline{\partial}_z \right), d_z^{c}= \left( \partial_z - \overline{\partial}_z
\right)\slash 4\pi i$, and $d_zd_z^{c}= -{\partial_z\overline{\partial}_z}\slash{2\pi i}$. The $\delta_{w}(z)$ is the Dirac delta distribution. The hyperbolic Green's function $\mathcal{G}_{\mathrm{hyp}}(z,w)$ also satisfies the following normalization condition 
\begin{align}\label{normalizationhyperbolicgreen}
\int\limits_Y \mathcal{G}_{\mathrm{hyp}}(z,w) \hyp(z)=0.
\end{align}
Let $S_{2}(\Gamma)$ denote the $\mathbb{C}$-vector space of cusp forms of weight $2$ with respect to $\Gamma$ equipped with the Petersson inner product
\begin{align*}
\langle f, g \rangle_\mathrm{pet}:= \int\limits_Y f(z) \overline{g(z)}\Im(z)^2 \hyp(z)\,\ 
\end{align*}
for $f, g\in S_{2}(\Gamma)$.
Let $\left\lbrace f_{1},\ldots,f_{g_Y}\right\rbrace $ denote an orthonormal basis of $S_{2}(\Gamma)$ with respect to the {Petersson 
inner product}. Then, the canonical metric on $Y$  is defined by
\begin{equation*}
 \can(z)=\frac{i}{2g_Y} \sum_{j=1}^{g_Y}\left|f_{j}(z)\right|^{2}dz\wedge d\overline{z}.
\end{equation*}
Note that $\can$ extends smoothly to the canonical metric $\can$ on $X$.
The canonical Green's function $\mathcal{G}_{\mathrm{can}}(z,w)$ on $X$ is a function on $X\times X$, which is smooth away from the diagonal and has a logarithmic singularity along the diagonal. Away from the diagonal, it is uniquely characterized by 
\begin{align*}
d_z d_z^c\, \mathcal{G}_{\mathrm{can}}(z,w)+ \delta_{w}(z)=\can(z), \,\ \text{where} \,\ z, w\in X
\end{align*}
with the normalization condition
\begin{align*}
 \int\limits_{X}\mathcal{G}_{\mathrm{can}}(z,w)\can(z)=0 \,\ \text{with}\,\ w\in X.
\end{align*}
 In his work \cite{A12}, Aryasomayajula proved that the restriction of $\mathcal{G}_{\mathrm{can}}(z,w)$ to $Y\times Y$ coincides with the canonical Green's function on $Y$.
In \cite{Lang88}, p.~26, one can find an  explicit formula for the canonical Green's function on quotient spaces of genus zero having elliptic
fixed points.
\subsection{A key identity}
In this subsection, we recall a closed-form expression for the canonical Green's function in terms of hyperbolic Green's function and analytic functions derived from the hyperbolic heat kernel. Originally these ideas come from Jorgenson--Kramer \cite{JK06}, where the authors proved bounds for the canonical Green's function for compact Riemann surface associated with a cofinite Fuchsian subgroup having neither cusps nor elliptic fixed points. Later, Aryasomayajula \cite{A12}, \cite{A15Z}, extended these bounds for the canonical Green's function to non-compact Riemann surface associated with an arbitrary cofinite Fuchsian subgroup. 


Let $\mathcal{G}_{\mathrm{hyp}}(z,w)$ and $\mathcal{G}_{\mathrm{can}}(z,w)$ be the hyperbolic and the canonical Green's functions recalled in subsection \ref{sec_hypandcangreens}. Then, for $z, w\in X\setminus \mathcal{P}_\Gamma$, we have (see \cite{A12}, Proposition 2.6.4.)
\begin{align*}
&\mathcal{G}_{\mathrm{hyp}}(z,w)-\mathcal{G}_{\mathrm{can}}(z,w)= \Phi(z)+\Phi(w),
\end{align*}
where (from Corollary 3.2.7 in \cite{A12}), the function $\Phi(z)$ is given by the
formula
\begin{align*}
\Phi(z)=\frac{1}{2g_Y}\int\limits_{Y}\mathcal{G}_{\mathrm{hyp}}(z, \zeta)\, F(\zeta)\hyp(\zeta)-\frac{\mathrm{C}_{\mathrm{hyp}}}{8g^2_\Gamma}
\end{align*}
with
\begin{align*}
F(\zeta):=\int\limits_{0}^{\infty}\Delta_{\mathrm{hyp},\, \hspace{-.03cm} \zeta}
K_{\mathrm{hyp}}(t;\zeta)dt
\end{align*}
and
\begin{align}\label{Chypdefn}
\mathrm{C}_{\mathrm{hyp}}:=\int\limits_{Y\times Y} \mathcal{G}_{\mathrm{hyp}}(\xi, \zeta)\, F(\xi)\, F(\zeta)\hyp(\xi)\hyp(\zeta).
\end{align}

We set
\begin{align}
&\mathrm{E}_\Gamma(z)= \sum_{\substack{\gamma \in \Gamma\backslash \{\id\} \\\gamma\, \mathrm{elliptic}}} \mathcal{G}_{\mathbb{H}}(z, \gamma z), \label{ellipticseries} \\
&\mathrm{P}_\Gamma(z)= \sum_{\substack{\gamma \in \Gamma\backslash \{\id\} \\\gamma\, \mathrm{parabolic}}} \mathcal{G}_{\mathbb{H}}(z, \gamma z),\notag \\&\mathrm{H}_\Gamma(z)= 4\pi\int\limits_{0}^{\infty}\bigg(\mathrm{H}K_{\mathrm{hyp}}(t;z)-\frac{1}{v_Y}\bigg)dt\notag,
\end{align}
where the Green's function $\mathcal{G}_{\mathbb{H}}(z, w)$ is given by \eqref{heatkernelandgreen} and the function $\mathrm{H}K_{\mathrm{hyp}}(t;z)$ is defined in \eqref{heatkernelhyperbplic}. In \cite{JK11}, the authors proved that the functions $\mathrm{E}_\Gamma(z)$, $\mathrm{P}_\Gamma(z)$, and $\mathrm{H}_\Gamma(z)$ are absolutely and locally uniformly convergent.

\begin{prop}\label{awayellipticprop}
With the above notation, for $z, w \in X \setminus (\mathcal{E}_\Gamma \cup \mathcal{P}_\Gamma)$, $z\not = w$,
we have
\begin{align*}
&\mathcal{G}_{\mathrm{hyp}}(z,w)-\mathcal{G}_{\mathrm{can}}(z,w)= \Phi(z)+\Phi(w),
\end{align*}
where
\begin{align*}
\Phi(z)
&=\frac{1}{8\pi g_Y}\int\limits_{Y}\mathcal{G}_{\mathrm{hyp}}(z,\zeta)\left(\Delta_{\mathrm{hyp}, \, \hspace{-.03cm} \zeta}\mathrm{P}_\Gamma(\zeta)\right)
\hyp(\zeta) 
-\frac{1}{2g_Y} \sum_{j=1}^{e_\Gamma}\left(1- \frac{1}{\ord(e_j)}\right) \mathcal{G}_{\mathrm{hyp}}(z, e_j)
\\
&\phantom{=}+\frac{\mathrm{H}_\Gamma(z)+\mathrm{E}_\Gamma(z)}{2g_Y}- \frac{\mathrm{C_{hyp}}}{8g_Y^{2}}-\frac{2\pi(c_{Y}-1)}{g_Y v_Y}
-\frac{1}{2g_Y}\int\limits_{Y}\mathrm{E}_\Gamma(\zeta)\shyp(\zeta).
\end{align*}
Here $\mathrm{E}_\Gamma(z)$, $\mathrm{P}_\Gamma(z)$, $\mathrm{H}_\Gamma(z)$ are defined in \eqref{ellipticseries}, $\mathrm{C_{hyp}}$ is given by \eqref{Chypdefn}, and $c_Y$ denotes the Selberg zeta constant of $Y$ given in \eqref{selbergzetaconstant}.
\end{prop}
\begin{proof}
See \cite{A15Z}, Corollary 3.12.
\end{proof}

\subsection{Congruence subgroup \texorpdfstring{$\Gamma_0(N)$}{}}
Let $N$ be a positive integer. We define
\begin{align*}
\Gamma_0(N)&=\bigg \lbrace\bigg( \begin{array}{ccc} a &b\\
 c & d \end{array}\bigg)\in \PSLZ\mid c \equiv 0 \,\ \text{mod}\,\ N\bigg \rbrace.
 \end{align*}
 In this subsection we consider $\Gamma= \Gamma_0(N)$, and $Y=\Gamma_0(N)\backslash \mathbb{H}$. Then the hyperbolic volume of $Y$ is given by
\begin{align}\label{volumeGammaN}
 v_Y= \frac{\pi N}{3}\prod_{\substack{p|N\\ p\,\text{prime}}}\left(1+\frac{1}{p}\right).
\end{align}
We know that every cusp of $\Gamma$ is equivalent to one among the following rationals 
\begin{align*}
\frac{m}{n}\,\ \text{with}\,\ m,n>0,\,\ n|N, \,\ (m, n)=1.
\end{align*}
Two cusps $m\slash n$ and $m_1\slash n_1$ of the above type are $\Gamma$-equivalent if and only if 
\begin{align*}
n_1=n\,\ \text{and}\,\ m_1\equiv m\mod\left(n, \frac{N}{n}\right).
\end{align*}
Hence the number of inequivalent cusps of $\Gamma$ is given by
\begin{align}\label{numberofcusp}
p_{\small{\Gamma}}= \sum_{\substack{d|N\\d>0}}\phi\left(\left(d, {N}\slash{d}\right)\right),
\end{align}
where $\phi$ is the Euler function.

\begin{rem}
    A cusp of $Y$ is the $\Gamma$-orbit of a parabolic fixed point of $\Gamma$.
By $\mathcal{P}_\Gamma \subseteq \mathbb{P}^1(\mathbbm{Q})$ we denote a complete set of representatives
for the cusps of $Y$. We will always identify a cusp of $Y$ with its representative in $\mathcal{P}_{\Gamma}$.
Hereby, identifying $\mathbb{P}^1(\mathbbm{Q})$ with $\mathbb{Q}\cup\{\infty\}$,
we write elements of $\mathbb{P}^1(\mathbbm{Q})$ as $n/m$ for $n,m\in\mathbb{Z}$, not both
equal to $0$, and we always assume that $n|N$ and $(n,m)=1$; we set $1/N:=\infty$.
\end{rem}

\vspace{0.1cm} \noindent
Let $a=m/n$ be a cusp of $\Gamma$, the scattering function $\varphi_{a\infty}(s)$ with respect to the cusps $a, \infty$, is given by the following formula (see \cite{DI82}, p.~247)
\begin{align}\label{scatteringfunctionGamma0Nainfty}
\varphi_{a\infty}(s)=\sqrt{\pi}\frac{\Gamma(s-\frac{1}{2})}{\Gamma(s)}\frac{\zeta(2s-1)}{\zeta(2s)}\frac{\,\phi(n)}{\phi((n, N\slash n))}F(s),
\end{align}
where
\begin{align*}
F(s) =\left(\frac{(n, N\slash n)}{nN}\right)^s\prod_{\substack{p|N\\ p \, \text{prime}}}\frac{p^{2s}}{p^{2s}-1}\prod_{\substack{q|\frac{N}{n}\\q\, \text{prime}}}\left(1-\frac{1}{q^{2s-1}}\right).
\end{align*}
By following the line of proof from \cite{DI82} we get the scattering function $ \varphi_{a0}(s)$
 with respect to the cusps $a=m\slash n,\, 0$, and which is given by the following formula (see \cite{PM}, Chapter 3, pp.~48--49)
\begin{align}\label{phia0Gammagen0N}
\varphi_{a0}(s)=\sqrt{\pi}\frac{\Gamma(s-\frac{1}{2})}{\Gamma(s)}\frac{\zeta(2s-1)}{\zeta(2s)}\frac{\phi(N\slash n)}{\phi(n, N\slash n)}\,G(s),
\end{align}
where
\begin{align*}
G(s)=\frac{(n^2,N)^s}{N^{2s}}\prod_{\substack{p|N\\ p\,\text{prime}}}\frac{p^{2s}}{p^{2s}-1}\prod_{\substack{q|n\\ q\,\text{prime} }}\left(1-\frac{1}{q^{2s-1}}\right).
\end{align*}
The elliptic fixed points of $\Gamma$ of order $\ord(e_j)=2$ are explicitly given by (see, e.g., \cite{DS})
\begin{align}\label{nfori}
e_j= \frac{n+i}{n^2+1}\,\ \text{for}\,\ n=0,\dotsc, N-1 \,\ \text{such that}\,\ n^2+1 \equiv 0 \,(\text{mod}\, N)
\end{align}
and the elliptic fixed points of $\Gamma$  with $\ord(e_j)=3$ are explicitly given by
\begin{align}\label{nforother}
e_j= \frac{n+\frac{1+i\sqrt{3}}{2}}{n^2-n+1}\,\ \text{for}\,\ n=0,\dotsc, N-1 \,\ \text{such that}\,\ n^2-n+1 \equiv 0 \,(\text{mod}\, N).
\end{align}

The number of elliptic fixed points is equal to 
\begin{align*}
e_{\small{\Gamma}}= \nu_2+\nu_3,
\end{align*}
where
\begin{equation*}
    \nu_2=
    \begin{cases}
     0 & \text{if}\,\ 4|N, \\
     \displaystyle \prod_{\substack{p|N\\ p\,\text{prime}}}\small{\left(1+\left(\frac{-1}{p}\right)\right) }& \text{otherwise},
    \end{cases}
\end{equation*}
and
\begin{equation*}
    \nu_3=
    \begin{cases}
     0 & \text{if}\,\ 9|N, \\
     \displaystyle \prod_{\substack{p|N\\ p\,\text{prime}}}\small{\left(1+\left(\frac{-3}{p}\right)\right)}& \text{otherwise.}\,\ 
    \end{cases}
\end{equation*}
Here $\nu_2$ resp.~$\nu_3$ denotes the number of elliptic points of order $2$ and $3$, respectively, and $\left(\frac{\cdot}{p}\right)$ denotes the generalized quadratic residue symbol (see \cite{Sh}, p.~25).
\begin{rem}\label{r1}
Note that $\omega(N)=O ({\log N }\slash{\log \log N})$, where $\omega(N)$ denotes the number of primes which divide $N$.
This implies (see, e.g., \cite{A15Z}, Remark 6.12)
\begin{align}\label{ellipticboundGammaN}
e_{\small{\Gamma}}=O\big(N^\varepsilon\big)\,\ \text{for any given}\,\ \varepsilon>0.
\end{align}
Then, from the volume formula (\ref{volumegenusformula}), we have the following asymptotics
\begin{align}\label{volgeGamma}
\frac{4\pi (g_Y-1)}{v_Y}= 1+o(N)\,\ \text{as}\,\ N\to \infty.
\end{align}

\end{rem}

\begin{rem}\label{r2} We also recall two important bounds for $Y$ by Jorgenson--Kramer. In \cite{JK01}, pp.~26--27, they proved the following bound for the Selberg zeta constant 
    \begin{align}\label{cX}
        c_{Y}= O_\epsilon(N^\epsilon)\,\ \text{for any given}\,\ \varepsilon>0.
        \end{align}
        In  \cite{JK09}, Proposition 5.4, the authors prove a bound for the term $\displaystyle d_{Y}=\sup_{z\in Y}\left|\frac{\can(z)}{\shyp(z)}\right|$, which is given by
        \begin{align}\label{dX}
            d_Y=O(1),
        \end{align}
        where the implied constant is
independent of $N$. 
        \end{rem}
        \begin{rem}\label{lamda}
           Let $\lambda_1$ denote the smallest non-zero eigenvalue of the hyperbolic Laplacian $\del$. From \cite{LRS95}, Theorem 1.1, we know $
\lambda_1\geq 21\slash 100$.        
\end{rem}

We use the bound mentioned in Remark \ref{r1}, Remark \ref{r2} and Remark \ref{lamda} later in our paper, namely, in section \ref{BSCGN} and in section \ref{BKLFGN} and they play a crucial role to prove Theorem \ref{maintheorem2article}.

\section{Proof of Theorem \ref{maintheorem1article}}\label{section5}

To prove the Main Theorem \ref{maintheorem1article} we start with the key identity
given in Proposition \ref{awayellipticprop} that expresses the canonical Green's function in terms of the hyperbolic Green's function and explicit analytic functions.
We then first recall the behaviour hyperbolic Green's function and the Kronecker limit function at a cusps,
which can be proven by using its Fourier expansions.
We then recall and provide bounds for the other terms arising in Proposition \ref{awayellipticprop}. 
Employing all these results, we are finally able to deduce Theorem \ref{maintheorem1article}.



\begin{lem}\label{ghypsum}
Let $p_j\in \mathcal{P}_\Gamma$ be a cusps with scaling matrix $\sigma_{\small{p_j}}$. For fixed $w\in Y$, as $z$ approaches to the cusp $p_j\in \mathcal{P}_\Gamma$, we have
\begin{align*}
&\mathcal{G}_{\mathrm{hyp}}(z, w)=4\pi\mathcal{K}_{p_j}(w) - 
\frac{4\pi}{v_Y}-\frac{4\pi \log\big(\hspace{-0.03cm}\Im({\sigma_{p_j}^{-1}}z)\big)}{v_Y},
\end{align*}
where $\mathcal{K}_{p_j}(w)$ denotes the Kronecker limit function given in \eqref{KLFdefn}.
\end{lem}

\begin{proof}
From \cite{A12}, Corollary 1.9.5, we know the following estimate for the automorphic Green's function. Let $p_j$ and $p_k$ are two cusps, then
\begin{align*}
&\mathcal{G}_{\mathrm{hyp},\, \hspace{-.03cm} s}(\sigma_{p_j}^{}z,\sigma_{p_k}^{}w)\\&=
\frac{4\pi\Im(z)^{1-s}}{2s-1}{E}_{p_j}(\sigma_{p_k}^{}w,s)
 - \delta_{p_jp_k}\log\big{|}1 - e^{2\pi i(z- w)}
\big{|}
+ O\big(e^{-2\pi(\Im(z)-\Im(w))}\big).
\end{align*}
This estimate implies 
\begin{align*}
&\mathcal{G}_{\mathrm{hyp},\, \hspace{-.03cm} s}(z,w)\\
&=\frac{4\pi\Im(\tiny{\sigma_{p_j}^{-1}}z)^{1-s}}{2s-1}{E}_{p_j}(w,s)
 - \log\big{|}1 - e^{2\pi i(\sigma_{p_j}^{-1}z- \sigma_{p_j}^{-1}w)}
\big{|}+ O\big(e^{\hspace{-.01cm}-2\pi(\Im(\sigma_{p_j}^{-1}z)-\Im(\sigma_{p_j}^{-1}w))}\big).
\end{align*}
Now using (\ref{KLFdefn}) and (\ref{laurentexpanionautogreen}), we can write 
\begin{align*}
&\mathcal{G}_{\mathrm{hyp}}(z, w)=4\pi\mathcal{K}_{p_j}(w) - 
\frac{4\pi}{v_Y}-\frac{4\pi \log\big(\hspace{-0.03cm}\Im({\sigma_{p_j}^{-1}}z)\big)}{v_Y}\,\ \text{as}\,\ z \to p_j.
\end{align*}
This completes the proof.
\end{proof}

\begin{lem}\label{FourierexpansionKLF} 
Let $p_k, p_l \in \mathcal{P}_\Gamma$ $($with $p_k\not=p_l)$ be cusps with scaling matrices $\sigma_{\small{p_k}},\sigma_{\small{p_l}}$, respectively. Then, 
 that as $z$ approaches the cusp $p_l$, we have
\begin{align*}
\mathcal{K}_{p_k}(z)=\mathcal{C}_{p_k p_l}-\frac{\log\big(\hspace{-0.03cm}\Im(\sigma_{p_l}^{-1}z)\big)}{v_Y},
\end{align*}
where $\mathcal{C}_{p_k p_l}$ is the scattering constant defined in \eqref{scatteringconstantdef}.
\end{lem}
\begin{proof}
By definition \eqref{KLFdefn}, we recall the identity
\begin{align*}
\mathcal{K}_{p_k}(\sigma_{\small{p_l}}^{}z)= \lim_{s \to 1}\bigg({E}_{p_k }(\sigma_{\small{p_l}}^{}z)- \frac{1}{(s-1)v_Y}\bigg).
\end{align*}
Now, using the Fourier expansion \eqref{FourierEisenstein} of the Eisenstein series, we obtain 
\begin{align}\label{limitforn0}
\mathcal{K}_{p_k}(\sigma_{\small{p_l}}^{}z)=& \lim_{s \to 1}\bigg(\varphi_{p_kp_l}(s)  \Im(z)^{1-s}- \frac{1}{(s-1)v_Y}\bigg)\notag\\
&+
\sum_{n\not=0}\varphi_{p_kp_l}(n,1)\Im(z)^{1/2}K_{1/2}(2\pi|n|\Im(z))\,e^{2\pi i n\Re(z)}.
\end{align}
Recalling \eqref{scatteringconstantdef}, at $s=1$, we have the Laurent expansion
\begin{align*}
\varphi_{p_kp_l}(s)=\frac{1}{(s-1)v_Y}+ \mathcal{C}_{p_k p_l}+ O(s-1).
\end{align*}
Further, at $s=1$, we have the Taylor expansion
\begin{align*}
\Im(z)^{1-s}=1-\log\big(\Im(z)\big) (s-1)+O\big((s-1)^2\big).
\end{align*}
This yields
\begin{align}\label{eq_klf_fourier1}
\lim_{s \to 1}\bigg(\varphi_{p_kp_l}(s)  \Im(z)^{1-s}- \frac{1}{(s-1)v_Y}\bigg)=\mathcal{C}_{p_k p_l}-\frac{\log \big(\Im(z)\big)}{v_Y}.
\end{align}
Moreover, using the well-known identity $K_{1\slash 2}(Z)=\pi^{1/2}(2Z)^{-1/2}e^{-Z}$ with $Z:=2\pi |n| \Im(z)$, we get
\begin{align}\label{eq_klf_fourier2}
\Im(z)^{1/2}
K_{1\slash 2}(2\pi |n| \Im(z) )=\frac{1}{2}|n|^{-1/2}e^{-2\pi |n|\Im(z)}.
\end{align}
Substituting \eqref{eq_klf_fourier1} and \eqref{eq_klf_fourier2} into \eqref{limitforn0} 
yields that the Fourier expansion of the Kronecker limit function $\mathcal{K}_{p_k}(z)$ with respect to the cusp $p_l$ has the form
\begin{align*}
\mathcal{K}_{p_k}(\sigma_{\small{p_l}} z)=\mathcal{C}_{p_k p_l}-\frac{\log\big(\Im(z)\big)}{v_Y}+\frac{1}{2}\sum_{n\not=0}
\varphi_{p_kp_l}(n,1)|n|^{-1/2} \,e^{-2\pi |n| \Im(z)} \,  e^{2\pi i n \Re(z)},
\end{align*}
where $\varphi_{p_kp_l}(n,1)$ is given by \eqref{phin}. This yields the assertion.
\end{proof}

\begin{prop}\label{propparabolicintegral}
Let $p_k$ be a cusp of $\Gamma$. Then, as $z$ approaches the cusp $p_k$, we have
\begin{align*}
&\frac{1}{8\pi g_Y}\int\limits_{Y}\mathcal{G}_{\mathrm{hyp}}(z,w)\big(\Delta_{\mathrm{hyp},\, \hspace{-.03cm} w} \mathrm{P}_\Gamma(w)\big)
\hyp(w)\\
&=\frac{2\pi p_\Gamma- v_Y}{g_Y v_Y}\log\big(\Im(\sigma_{p_k}^{-1}z)\big)-\frac{2\pi}{g_Y}\sum_{j=1}^{p_\Gamma}\mathcal{C}_{p_kp_j}-\frac{\log(4\pi)}{g_Y}+\frac{2\pi p_\Gamma}{g_Y v_Y}+R,
\end{align*}
where $|R|\leq \big(\frac{4\pi^2}{3}+1\big)\frac{p_\Gamma}{g_Y v_Y}$.
\begin{proof}
The complete proof of this proposition is in \cite{PM}, Proposition 2.4.7 but for the reader's convenience we provide a very short outline of the proof here. Note that, in \cite{A12}, Chapter 7, the author considered the Fuchsian subgroup $\Gamma$ without any elliptic fixed points. To prove \cite{A12}, Proposition 7.1.12, the author used the following identity
\begin{align*}
    \frac{v_Y}{2\pi}-p_\Gamma= 2(g_Y-1)
\end{align*}
solely in one instance on page 149. By employing this identity once more in \cite{A12}, Proposition 7.1.12, we obtain 
\begin{align}\label{formula}
    &\frac{1}{8\pi g_Y}\int\limits_{Y}\mathcal{G}_{\mathrm{hyp}}(z,w)\big(\Delta_{\mathrm{hyp},\, \hspace{-.03cm} w} \mathrm{P}_\Gamma(w)\big)
\hyp(w)\notag \\
&=\frac{2\pi p_\Gamma- v_Y}{g_Y v_Y}\log\big(\Im(\sigma_{p_k}^{-1}z)\big)-\frac{2\pi}{g_Y}\sum_{j=1}^{p_\Gamma}\mathcal{C}_{p_kp_j}-\frac{\log(4\pi)}{g_Y}+\frac{2\pi p_\Gamma}{g_Y v_Y}+R,
\end{align}
where
\begin{align*}
    R= \frac{1}{2g_Y}\sum_{j=1}^{p_\Gamma}\int_{1\slash \Im(\sigma_{p_k}^{-1}z)}^{\infty}\frac{\log (\Im(w))}{v_Y}\Delta_{\mathrm{hyp},\, \hspace{-.03cm} w} \mathrm{P}_{\mathrm{gen},\, p_j}(\sigma_{p_j}w)\frac{d(\Im(w))}{\Im(w)^2}.
    \end{align*}
Note that, here $\mathrm{P}_{\mathrm{gen},\, p_j}(w):=\displaystyle\sum_{n\not=0}\mathcal{G}_\mathbb{H}(w, \gamma_{p_j}^n w)$, where $\gamma_{p_j}$ is a generator of the stabilizer subgroup $\Gamma_{p_j}$. Then from \cite{A12}, Lemma 7.1.9, we have 
\begin{align*}
    |R|\leq \left(\frac{4\pi^2}{3}+1\right)\frac{p_\Gamma}{g_Y v_Y}.
    \end{align*}
    By employing the above bound in the formula \eqref{formula} we complete the proof.
\end{proof}
\end{prop}

\begin{lem}\label{ellipticandhyperbolic}
Let $p_k\in \mathcal{P}_\Gamma$ be a cusp with scaling matrix $\sigma_{\small{p_k}}$.
Then, as $z$ approaches the cusp $p_k$, we have
\begin{align*}
\frac{\mathrm{H}_\Gamma(z)+\mathrm{E}_\Gamma(z)}{2g_Y}=\frac{2\pi}{g_Y}\, \mathcal{C}_{p_k p_k} -\frac{4\pi\log\big(\Im(\sigma_{p_k}^{-1}z)\big)}{g_Y v_Y}-\frac{2\pi}{g_Y v_Y},
\end{align*}
where $\mathrm{E}_\Gamma(z)$ and $\mathrm{H}_\Gamma(z)$ are given in \eqref{ellipticseries}.
\begin{proof}
This result is proven in \cite{A15Z}, Proposition 2.10.,
with a minus sign corrected.
\end{proof}
\end{lem}
\begin{lem}\label{Chyp}
The constant $\mathrm{C_{hyp}}$ given in (\ref{Chypdefn}), satisfies the inequality 
\begin{align*}
\frac{\mathrm{C_{hyp}}}{8g_Y^{2}}
\leq  \frac{2\pi \left(d_{Y}+1\right)^{2}}{\lambda_{1}v_Y},
\end{align*}
where
$$
d_{Y}:=\sup_{z\in Y}\bigg|\frac{\can(z)}{\shyp(z)}\bigg|
$$
and $\lambda_1$ denotes the smallest non-zero eigenvalue of $\del$.
\begin{proof}
See \cite{A15Z}, Proposition 5.10.
\end{proof}
\end{lem}

\begin{lem}\label{ellipticlemma}
Let $\mathcal{E}_\Gamma=\lbrace e_j \mid j=1, \dotsc ,e_\Gamma \rbrace$ be the set of  elliptic fixed points of $\Gamma$. Then
\begin{align*}
\bigg|\int\limits_Y \mathrm{E}_\Gamma(\zeta)\shyp(\zeta)\bigg|\leq \frac{4\pi \log 2}{v_Y}\sum_{j=1}^{e_\Gamma}(\ord(e_j)-1),
\end{align*}
where $\mathrm{E}_\Gamma(\zeta)$ is given in (\ref{ellipticseries}).
\begin{proof}
Let $e_j$ be an elliptic fixed point. Then we know that the stabilizer group $\Gamma_{e_j}$ is cyclic, and there exists a scaling matrix
$\sigma_{e_j}\in \PSL$ such that 
\begin{align*}
\Gamma_{e_j}= \langle \gamma_{e_j}\rangle,\,\ \text{where}\,\ \gamma_{e_j}= \sigma_{e_j} \gamma_{i,\,{e_j}}\sigma_{e_j}^{-1}
\end{align*}
with  
\begin{align*}
  \gamma_{i,\,{e_j}}= 
  \begin{pmatrix}\cos (\pi\slash \ord(e_j))& \sin  (\pi\slash \ord(e_j))\\ \\
 -\sin  (\pi\slash \ord(e_j)) & \cos  (\pi\slash \ord(e_j))  \end{pmatrix}.
\end{align*} 
Then we have the following disjoint union decomposition 
\begin{align}\label{decomposition}
&\big\lbrace \gamma \in \Gamma\backslash \{\id\}\mid \gamma \, \mathrm{elliptic}\big\rbrace =  \bigcup_{j=1}^{e_\Gamma} \bigcup_{\eta \in \Gamma_{e_j} \backslash \Gamma} \big(\eta^{-1}\Gamma_{e_j} \eta \backslash \{\id\}\big)\notag\\
&=\bigcup_{j=1}^{e_\Gamma} \bigcup_{\eta \in \Gamma_{e_j} \backslash \Gamma} \bigcup_{\substack{n=1}}^{\ord(e_j)-1} \lbrace\eta^{-1}\gamma_{e_j}^n \eta \rbrace=\bigcup_{j=1}^{e_\Gamma} \bigcup_{\eta \in \Gamma_{e_j} \backslash \Gamma} \bigcup_{\substack{n=1}}^{\ord(e_j)-1} \lbrace \eta^{-1}\sigma_{e_j}\gamma_{i,\,{e_j}}^n \sigma_{e_j}^{-1} \eta \rbrace.
\end{align}
Now using the disjoint union decomposition (\ref{decomposition}), we get
\begin{align}
\mathrm{E}_\Gamma(\zeta)&= \sum_{\substack{\gamma \in \Gamma\backslash \{\id\} \\\gamma\, \mathrm{elliptic}}} \mathcal{G}_{\mathbb{H}}(\zeta, \gamma \zeta)
=\sum_{j=1}^{e_\Gamma}\sum_{\eta \in \Gamma_{e_j} \backslash \Gamma}\sum_{n=1}^{\ord(e_j)-1} \mathcal{G}_{\mathbb{H}}(\zeta, \eta^{-1}\sigma_{e_j}\gamma_{i,\,{e_j}}^n \sigma_{e_j}^{-1} \eta\, \zeta)\notag \\
&=\sum_{j=1}^{e_\Gamma}\sum_{\eta \in \Gamma_{e_j} \backslash \Gamma}\sum_{n=1}^{\ord(e_j)-1} \mathcal{G}_{\mathbb{H}}(\sigma_{e_j}^{-1} \eta \,\zeta, \gamma_{i,\,{e_j}}^n \sigma_{e_j}^{-1} \eta \,\zeta)\label{E(zeta)}.
\end{align}

\vspace{0.1cm} \noindent
By taking integral on (\ref{E(zeta)}), we get
\begin{align*}
\int\limits_Y \mathrm{E}_\Gamma(\zeta) \shyp(\zeta)=  \sum_{j=1}^{e_\Gamma}\sum_{\eta \in \Gamma_{e_j} \backslash \Gamma}\sum_{n=1}^{\ord(e_j)-1} \int\limits_{\mathcal{F}_\Gamma} \mathcal{G}_{\mathbb{H}}(\sigma_{e_j}^{-1} \eta\, \zeta, \gamma_{i,\,{e_j}}^n \sigma_{e_j}^{-1} \eta \,\zeta) \shyp(\zeta).
\end{align*}
Now, note that from \eqref{cosh-hypdist}, for $z, w\in \mathbb{H}$ with $z\not=w$, we have
\begin{align*}
u(z,w)^{-1}
=\frac{2}{\cosh(d_{\mathrm{hyp}}(z, w))-1}=
\sinh\left(\frac{d_{\mathrm{hyp}}(z, w)}{2}\right)^{-2}.
\end{align*}
Thus using \eqref{greenH}, we get for $n=1,\ldots,\ord(e_j)-1$, the following identity
\begin{align}\label{logtanh}
\mathcal{G}_{\mathbb{H}}(\sigma_{e_j}^{-1} \eta\,\zeta, \gamma_{i,\,{e_j}}^n \sigma_{e_j}^{-1} \eta\,\zeta)
&=\log\left(1+u( \sigma_{e_j}^{-1} \eta\,\zeta, \gamma_{i,\,{e_j}}^n\sigma_{e_j}^{-1} \eta\,\zeta)^{-1} \right) \notag \\& 
=- \log \left( \tanh^2 \big({d_{\mathrm{hyp}}(\sigma_{e_j}^{-1} \eta\,\zeta, \gamma_{i,\,{e_j}}^n 
\sigma_{e_j}^{-1} \eta\,\zeta)}\slash {2}\big)\right).
\end{align}
By recalling the triangular inequality of the hyperbolic distance function, we have
\begin{align}\label{dist}
\frac{1}{2} d_{\mathrm{hyp}}(\sigma_{e_j}^{-1} \eta\,\zeta, \gamma_{i,\,{e_j}}^n\sigma_{e_j}^{-1} \eta\,\zeta)&\leq \frac{1}{2}\left( d_{\mathrm{hyp}}( \sigma_{e_j}^{-1} \eta\,\zeta, i)+ d_{\mathrm{hyp}}(i, \gamma_{i,\,{e_j}}^n\sigma_{e_j}^{-1} \eta\,\zeta)\right)\notag\\& =  d_{\mathrm{hyp}}(\sigma_{e_j}^{-1} \eta\,\zeta, i).
\end{align}
Then, using (\ref{logtanh}) and (\ref{dist}), we get
\begin{align}\label{boundrho}
\big|\mathcal{G}_{\mathbb{H}}(\sigma_{e_j}^{-1} \eta\,\zeta, \gamma_{i,\,{e_j}}^n\sigma_{e_j}^{-1} \eta\,\zeta)\big|\leq \big|\log \big( \tanh^2(\rho(\sigma_{e_j}^{-1} \eta\,\zeta) )\big)\big|, \,\ \text{where}\,\ \rho( \sigma_{e_j}^{-1} \eta\,\zeta) := d_{\mathrm{hyp}}(\sigma_{e_j}^{-1} \eta\,\zeta, i) .
\end{align}
Now, using (\ref{boundrho}), we get 
\begin{align*}
\bigg|\int\limits_Y \mathrm{E}_\Gamma(\zeta)\shyp(\zeta)\bigg|
&\leq\sum_{j=1}^{e_\Gamma}\sum_{\eta \in \Gamma_{e_j} \backslash \Gamma}
\sum_{n=1}^{\ord(e_j)-1} \int\limits_{\mathcal{F}_\Gamma} \big|\log \big( \tanh^2(\rho( \sigma_{e_j}^{-1} \eta\,\zeta)) \big)\big| \shyp(\zeta)\\
&=\frac{1}{v_Y}\sum_{j=1}^{e_\Gamma}(\ord(e_j)-1)\sum_{\eta \in \Gamma_{e_j} \backslash \Gamma}\,\ 
\int\limits_{\mathcal{F}_\Gamma} \big|\log \big( \tanh^2(\rho(\sigma_{e_j}^{-1} \eta\,\zeta)) \big)\big| \hyp(\zeta).
\end{align*}
Now we write the hyperbolic metric $\hyp(\zeta)$ in hyperbolic polar coordinates centered at $i$, i.e.,
\begin{align*}
\hyp(\zeta)= \sinh(\rho(\zeta)) \, d\rho \, d \theta,\,\ \text{where}\,\ \rho=\rho(\zeta)\in [0, \infty) \,\ \text{and}\,\ \theta \in [0,2 \pi).
\end{align*}
Then making the substitution $\zeta \mapsto \eta^{-1}\sigma_{e_j} \zeta$, and using the $\PSL$-invariance  of $\hyp(\zeta)$, we get
\begin{align*}
&\sum_{\eta \in \Gamma_{e_j} \backslash \Gamma}\,\ 
\int\limits_{ \mathcal{F}_\Gamma} \big|\log \big( \tanh^2(\rho(\sigma_{e_j}^{-1} \eta\,\zeta)) \big)\big| \hyp(\zeta)
=\int\limits_0^\infty\int\limits_0^{2\pi}|\log(\tanh^2 \rho)| \sinh(\rho)  d \theta d\rho\\
&=2\pi \int\limits_0^\infty|\log(\tanh^2(\rho))| \sinh(\rho) d\rho
=2\pi \int\limits_1^\infty|\log(1-t^{-2})| dt
=4\pi \log(2). 
\end{align*}
So, finally we get 
\begin{align*}
&\bigg|\int\limits_Y \mathrm{E}_\Gamma(\zeta)\shyp(\zeta)\bigg|\\
&\leq \frac{\sum_{j=1}^{e_\Gamma}(\ord(e_j)-1)}{v_Y}\int\limits_0^\infty\int\limits_0^\pi|\log(\tanh^2 \rho)| 2\sinh(\rho)  d \theta d\rho=\frac{\pi\log(16)}{v_Y}\sum_{j=1}^{e_\Gamma}(\ord(e_j)-1).
\end{align*}
 This completes the proof.
\end{proof}
\end{lem}


\begin{proof}[Proof of Theorem \ref{maintheorem1article}]
From Proposition \ref{awayellipticprop}, 
for $z, w \in X \setminus (\mathcal{E}_\Gamma \cup \mathcal{P}_\Gamma)$, $z\not = w$,
we have
\begin{align*}
\mathcal{G}_{\mathrm{can}}(z,w)=\mathcal{G}_{\mathrm{hyp}}(z,w)-\Phi(z)-\Phi(w),
\end{align*}
where
\begin{align}\label{phiz}
\Phi(z)
&=\frac{1}{8\pi g_Y}\int\limits_{Y}\mathcal{G}_{\mathrm{hyp}}(z,\zeta)\left(\Delta_{\mathrm{hyp}, \, \hspace{-.03cm} \zeta}\mathrm{P}_\Gamma(\zeta)\right)
\hyp(\zeta) 
-\frac{1}{2g_Y} \sum_{j=1}^{e_\Gamma}\left(1- \frac{1}{\ord(e_j)}\right) \mathcal{G}_{\mathrm{hyp}}(z, e_j)
\\
&\phantom{=}+\frac{\mathrm{H}_\Gamma(z)+\mathrm{E}_\Gamma(z)}{2g_Y}- \frac{\mathrm{C_{hyp}}}{8g_Y^{2}}-\frac{2\pi(c_{Y}-1)}{g_Y v_Y}
-\frac{1}{2g_Y}\int\limits_{Y}\mathrm{E}_\Gamma(\zeta)\shyp(\zeta).
\end{align}
Now, for a fixed $w \in X \setminus (\mathcal{E}_\Gamma \cup \mathcal{P}_\Gamma)$, we set
\begin{align*}
A(w)=\lim_{z\to p_k}\big( \mathcal{G}_{\mathrm{hyp}}(z,w)-\Phi(z)\big).
\end{align*}
Then 
\begin{align*}
A&(w)=\lim_{z\to p_k}\big(\mathcal{G}_{\mathrm{hyp}}(z,w)-\frac{1}{8\pi g_Y}\int\limits_{Y}\mathcal{G}_{\mathrm{hyp}}(z,\zeta)\big(\Delta_{\mathrm{hyp}, \, \hspace{-.03cm} \zeta} \mathrm{P}_\Gamma(\zeta)\big)\hyp(\zeta) -\frac{\mathrm{H}_\Gamma(z)}{2g_Y}-\frac{\mathrm{E}_\Gamma(z)}{2g_Y}\notag\\
&+\frac{1}{2g_Y} \sum_{j=1}^{e_\Gamma}\left( 1- \frac{1}{\ord(e_j)}\right)\mathcal{G}_{\mathrm{hyp}}(z, e_j)\big)+\frac{1}{2g_Y}\int\limits_{Y}\mathrm{E}_\Gamma(\zeta)\shyp(\zeta)+ \frac{\mathrm{C_{hyp}}}{8g_Y^{2}}+\frac{2\pi(c_{Y}-1)}{g_Y v_Y}.
\end{align*}
Now, from Lemma \ref{ellipticandhyperbolic}, recall that as $z$ approaches the cusp $p_k$, we have
\begin{align*}
\frac{\mathrm{E}_\Gamma(z)}{2g_Y}+\frac{\mathrm{H}_\Gamma(z)}{2g_Y}=\frac{2\pi}{g_Y}\, \mathcal{C}_{p_k p_k} -\frac{4\pi\log\big(\Im(\sigma_{p_k}^{-1}z)\big)}{g_Y v_Y}-\frac{2\pi}{g_Y v_Y}.
\end{align*}
From \eqref{ghypsum}, we recall that as $z$ approaches the cusp $p_k$, we have
\begin{align*}
&\mathcal{G}_{\mathrm{hyp}}(z, w)=4\pi\mathcal{K}_{p_k}(w) - 
\frac{4\pi}{v_Y}-\frac{4\pi \log\big(\hspace{-0.03cm}\Im({\sigma_{p_k}^{-1}}z)\big)}{v_Y}.
\end{align*}
Then using Proposition \ref{propparabolicintegral}, Lemma \ref{ellipticlemma}, and Lemma \ref{Chyp}, we get
\begin{align}\label{Aw}
A(w)=&4\pi \, \mathcal{K}_{p_k}(w)+ \frac{2\pi}{g_Y} \sum_{\substack{l=1\\ l\not=j}}^{p_\Gamma}\mathcal{C}_{p_lp_j}+ \frac{2\pi}{g_Y}\sum_{j=1}^{e_\Gamma}\left( 1- \frac{1}{\ord(e_j)}\right)\mathcal{K}_{p_k}(e_j)+\frac{2\pi c_{Y}}{g_Y v_Y}+ \delta_Y,
\end{align}
where the absolute value of $\delta_Y$ is bounded by
\begin{align*}
&\frac{2 \pi}{v_Y g_Y}\sum_{j=1}^{e_\Gamma}\left( 1+ \frac{1}{\ord(e_j)}\right)+\frac{2\log 2}{v_Y g_Y}\sum_{j=1}^{e_\Gamma}\big( {\ord(e_j)}+1\big)+\frac{2\pi(d_Y+1)^2}{\lambda_1 v_Y}\\&+\frac{2\pi}{v_Y}+\frac{\log(4 \pi)}{g_Y}
 +\frac{ p_\Gamma}{g_Y v_Y}\bigg(\pi+\frac{4\pi^2}{3}+1\bigg).
\end{align*}
Now, to complete the theorem it remains to compute the following limit
\begin{align*}
\mathcal{G}_{\mathrm{can}}(p_k,p_l)=\lim_{w\to p_l}\big(A(w)-\Phi(w)\big).
\end{align*}
For that we use the formula \eqref{phiz} and \eqref{Aw}. Here note that, by Lemma \ref{FourierexpansionKLF}, as $w$ approaches the cusp $p_l$, we have
\begin{align*}
4\pi\, \mathcal{K}_{p_k}(w)=4\pi\, \mathcal{C}_{p_k p_l}-\frac{4\pi\log\big(\hspace{-0.03cm}\Im(\sigma_{p_l}^{-1}w)\big)}{v_Y}.
\end{align*}
Then using Proposition \ref{propparabolicintegral}, Lemma \ref{ellipticlemma}, and Lemma \ref{Chyp}, we get
\begin{align*}
\mathcal{G}_{\mathrm{can}}(p_k,p_l)=& 4 \pi \, \mathcal{C}_{p_k p_l}+ \frac{2\pi}{g_Y} \sum_{\substack{j=1 \\ j\not= k}}^{p_\Gamma}\mathcal{C}_{p_k p_j}+\frac{2\pi}{g_Y} \sum_{\substack{j=1 \\ j\not= l}}^{p_\Gamma}\mathcal{C}_{p_l p_j}+\frac{4\pi c_Y}{g_Y v_Y}\\
&+\frac{2\pi}{g_Y}\sum_{j=1}^{e_\Gamma} \left( 1- \frac{1}{\ord(e_j)}\right) \left(\mathcal{K}_{p_k}(e_j)
+\mathcal{K}_{p_l}(e_j)\right)+\delta_Y,
\end{align*}
where the absolute value of $\delta_Y$ is bounded by
\begin{align*}
&\frac{4 \pi}{v_Y g_Y}\sum_{j=1}^{e_\Gamma}\left( 1+ \frac{1}{\ord(e_j)}\right)+\frac{4\log 2}{v_Y g_Y}\sum_{j=1}^{e_\Gamma}\big( {\ord(e_j)}+1\big)+\frac{4\pi(d_Y+1)^2}{\lambda_1 v_Y}\\&+\frac{4\pi}{v_Y}+\frac{2\log(4 \pi)}{g_Y}
 +\frac{2 p_\Gamma}{g_Y v_Y}\bigg(\pi+\frac{4\pi^2}{3}+1\bigg).
\end{align*}
This completes the proof.
\end{proof}
\section{Bounds for scattering constants of \texorpdfstring{$\Gamma_0(N)$}{}}\label{BSCGN}
Now, we consider $\Gamma=\Gamma_0(N)$
and $Y=\Gamma_0(N)\backslash \mathbb{H}$ with genus $g_Y$, where $N$ is a positive integer. 
Here we give bounds for scattering constants of $\Gamma$.
\begin{lem}\label{Lemma1Gammagen0N}
Let $\mathcal{C}_{0 \infty}$ denote the scattering constant with respect to the cusps $0$ and  $\infty$. Then
\begin{align*}
8\pi g_Y(1-g_Y)\, \mathcal{C}_{0\infty}= 2g_Y\log N + o(g_Y \log N)\,\ \text{as}\,\ N\to \infty.
\end{align*}
\begin{proof}
By substituting $n=1$ in \eqref{scatteringfunctionGamma0Nainfty} (or substitute $n=N$ in \eqref{phia0Gammagen0N}), we get the scattering function
\begin{align*}
\varphi_{0\infty}(s)
&= \sqrt{\pi}\, \frac{\Gamma(s-1\slash 2)}{\Gamma(s)}\frac{1}{N^s}\frac{\zeta(2s-1)}{\zeta(2s)}\prod_{\substack{p|N\\ p\,\text{prime}}}\frac{p^{2s}-p}{p^{2s}-1}.
\end{align*}
 Then, from the definition of scattering constant (\ref{scatteringconstantdef}) we can write
\begin{align}\label{limitequalto}
\mathcal{C}_{0\infty}=
\lim_{s\to 1}\bigg(\sqrt{\pi}\frac{\Gamma(s-1\slash 2)}{\Gamma(s)}\frac{\zeta(2s-1)}{\zeta(2s)}\frac{1}{N^s}\prod_{\substack{p|N\\ p\,\text{prime}}}\frac{p^{2s}-p}{p^{2s}-1}-\frac{1}{(s-1)v_Y}\bigg).
\end{align}
At $s=1$ we compute the following Taylor expansions 
\begin{align*}
&\frac{1}{N^s}= \frac{1}{N}-\frac{\log N}{N}(s-1)+O\left((s-1)^2\right),\notag\\ 
&\prod_{\substack{p|N\\ p\,\text{prime}}}\frac{p^{2s}-p}{p^{2s}-1}=\prod_{\substack{p|N\\ p\,\text{prime}}}\frac{p}{p+1}+\prod_{\substack{p|N\\ p\,\text{prime}}}\frac{p}{p+1}\sum_{\substack{q|N\\ q\,\text{prime}}}\frac{2q\log q}{q^2-1}(s-1)+O\left((s-1)^2\right).
\end{align*}

\vspace{0.2cm} \noindent
Note that, $\sqrt{\pi}\frac{\Gamma(s-1\slash 2)}{\Gamma(s)}\frac{\zeta(2s-1)}{\zeta(2s)}$ is the scattering function for $\PSLZ$ and from subsection \ref{Eis} we recall that 
\begin{align}\label{nextlemmaainfty}
 &\sqrt{\pi} \frac{\Gamma(s-\frac{1}{2})}{\Gamma(s)} \frac{\zeta(2s-1)}{\zeta(2s)}=\frac{3}{\pi(s-1)}+ \mathcal{C} +O(s-1)\,\ \text{as}\,\ s\to1,
\end{align}
where $\mathcal{C}=\frac{6}{\pi}\big(1-\log (4\pi)-12 \zeta'(-1)\big)$.

\vspace{0.2cm} \noindent
Then, from (\ref{limitequalto}), we get
\begin{align}\label{scateringconstantGamma0infty}
\mathcal{C}_{0\infty}=\frac{1}{v_Y}\bigg(\frac{\pi}{3}\mathcal{C}-\log N+ \sum_{\substack{p|N\\ p\,\text{prime}}}\frac{2p\log p}{p^2-1}\bigg).
\end{align}
Finally, multiplying $8\pi g_Y(1-g_Y)$ of the both side of (\ref{scateringconstantGamma0infty}), we get
\begin{align*}
8\pi g_Y(1-g_Y)\, \mathcal{C}_{0\infty}=\frac{8\pi g_Y(1-g_Y)}{v_{\Gamma_0(N)}}\bigg(\frac{\pi}{3}\mathcal{C}-\log N+ \sum_{\substack{p|N\\ p\,\text{prime}}}\frac{2p\log p}{p^2-1}\bigg).
\end{align*}
Finally, using (\ref{volgeGamma}), and taking into account that $\displaystyle\sum_{\substack{p|N\\ p\,\text{prime}}}\frac{\log p}{p}= O(\log \log N)$ (see e.g., \cite{Bruijn}), we have
\begin{align*}
8\pi g_Y(1-g_Y)\, \mathcal{C}_{0\infty}= 2g_Y\log N + o(g_Y \log N)\,\ \text{as}\,\ N\to \infty.
\end{align*}
This completes the proof.
\end{proof}
\end{lem}
\begin{lem}\label{Lemma2Gammagen0N}
Let $\mathcal{C}_{a\infty}$ denote the scattering constant with respect to the cusps $a=m\slash n$ and $\infty$. Then
\begin{align*}
4\pi (1-g_Y)\sum_{\substack{a\in \mathcal{P}_{\Gamma}\\ a\not=\infty}}\mathcal{C}_{a\infty}= o(g_Y \log N)\,\ \text{as}\,\ N\to \infty.
\end{align*}
\begin{proof}
From the definition (\ref{scatteringconstantdef}), we recall  
\begin{align*}
\mathcal{C}_{a\infty}=\lim_{s\to 1}\left( \varphi_{a\infty}(s)-\frac{1}{(s-1)v_Y}\right),
\end{align*}
where $\varphi_{a\infty}(s)$ is given by the formula \cite{DI82}, p.~247. Means, we can write
\begin{align*}
\mathcal{C}_{a\infty}=\frac{\phi(n)}{\phi\left((n,N\slash n\right))}\lim_{s\to 1}\left(\sqrt{\pi}\frac{\Gamma(s-\frac{1}{2})}{\Gamma(s)}\frac{\zeta(2s-1)}{\zeta(2s)}F(s)-\frac{1}{(s-1)v_Y}\right),
\end{align*}
where
\begin{align*}
F(s)=\left(\frac{(n, N\slash n)}{nN}\right)^s\prod_{\substack{p|N\\ p\,\text{prime}}}\frac{p^{2s}}{p^{2s}-1}\prod_{\substack{q|\frac{N}{n}\\q\, \text{prime}}}\left(1-\frac{1}{q^{2s-1}}\right).
\end{align*}
Now, at $s=1$ we compute the following Taylor expansions 
\begin{align*}
\left(\frac{(n, N\slash n)}{nN}\right)^s=\left(\frac{(n, N\slash n)}{nN}\right)+\left(\frac{(n, N\slash n)}{nN}\right)\log\left(\frac{(n, N\slash n)}{nN}\right)(s-1)
+O\left((s-1)^2\right),\hspace{.8cm}
\end{align*}
\begin{align*}
&\prod_{\substack{p|N\\ p\,\text{prime}}}\frac{p^{2s}}{p^{2s}-1}=\prod_{\substack{p|N\\ p\,\text{prime}}}\frac{p^{2}}{p^{2}-1}-\prod_{\substack{p|N\\ p\,\text{prime}}}\frac{p^{2}}{p^{2}-1}\sum_{\substack{q|N\\q\, \text{prime}}}\frac{2\log q}{q^2-1}(s-1)+O\left((s-1)^2\right),\\
&\prod_{\substack{p|\frac{N}{n}\\ p\,\text{prime}}}\frac{p^{2s}-p}{p^{2s}}=\prod_{\substack{p|\frac{N}{n}\\ p\,\text{prime}}}\frac{p-1}{p}+\prod_{\substack{p|\frac{N}{n}\\ p\,\text{prime}}}\frac{p-1}{p}\sum_{\substack{q|\frac{N}{n}\\q\, \text{prime}}}\frac{2\log q}{q-1}(s-1)+O\left((s-1)^2\right).
\end{align*}
Then, by recalling (\ref{nextlemmaainfty}), we get
\begin{align*}
\mathcal{C}_{a\infty}=\frac{1}{v_Y}\bigg(\frac{\pi}{3}\mathcal{C}+\log\left(\frac{(n, N\slash n)}{nN}\right)-\sum_{\substack{p|N\\ p\,\text{prime}}}\frac{2\log p}{p^2-1}+\sum_{\substack{p|\frac{N}{n}\\ p\,\text{prime}}}\frac{2\log p}{p-1}\bigg).
\end{align*}
Hence from (\ref{numberofcusp}), we get
\begin{align*}
&4\pi (1-g_Y)\sum_{\substack{a\in \mathcal{P}_{\Gamma}\\ a\not=\infty}}\mathcal{C}_{a\infty}\\&= \frac{4\pi (1-g_Y)}{v_Y}\bigg(\frac{\pi}{3}\mathcal{C}-\sum_{\substack{p|N\\ p\,\text{prime}}}\frac{2\log p}{p^2-1}\bigg)\sum_{\substack{n|N\\n\not=1}}\phi\left(\left(n, N\slash n\right)\right)\notag\\
&+\frac{4\pi (1-g_Y)}{v_Y}\sum_{\substack{n|N\\n\not=1}}\phi\left(\left(n, N\slash n\right)\right)\bigg(\log\left(\frac{(n, N\slash n)}{nN}\right)+\sum_{\substack{p|\frac{N}{n}\\ p\,\text{prime}}}\frac{2\log p}{p-1}\bigg).
\end{align*}
Finally, using (\ref{volgeGamma}), and taking into account that $\displaystyle\sum_{\substack{p|N\\ p\,\text{prime}}}\frac{\log p}{p}= O(\log \log N)$, we have
\begin{align*}
4\pi (1-g_Y)\sum_{\substack{a\in \mathcal{P}_{\Gamma}\\ a\not=\infty}}\mathcal{C}_{a\infty}= o(g_Y \log N)\,\ \text{as}\,\ N\to \infty.
\end{align*}
This completes the proof.
\end{proof}
\end{lem}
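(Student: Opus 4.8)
The plan is to reduce everything to the explicit scattering function $\varphi_{a\infty}(s)$ for a cusp $a=m/n$ of $\Gamma_0(N)$ (with $n\mid N$, $(m,n)=1$) paired against $\infty$. This function is classical and is recorded, e.g., in \cite{DI82}, p.~247: modulo the rational prefactor $\phi(n)/\phi((n,N/n))$ it has the shape
\begin{align*}
\sqrt{\pi}\,\frac{\Gamma(s-1/2)}{\Gamma(s)}\,\frac{\zeta(2s-1)}{\zeta(2s)}\cdot F(s),\qquad F(s)=\Big(\frac{(n,N/n)}{nN}\Big)^{s}\prod_{p\mid N}\frac{p^{2s}}{p^{2s}-1}\prod_{q\mid N/n}\big(1-q^{1-2s}\big),
\end{align*}
where the first two factors form precisely the $\PSLZ$ scattering function, whose Laurent expansion at $s=1$ is \eqref{nextlemmaainfty}, and $F$ is holomorphic and non-vanishing near $s=1$. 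The first task is to extract the constant term of $\varphi_{a\infty}(s)$ at $s=1$ by multiplying \eqref{nextlemmaainfty} against the Taylor expansion of $F$ at $s=1$: the residue is forced to equal $1/v_\Gamma$, so the prefactor $\phi(n)/\phi((n,N/n))$ together with $F(1)$ absorbs the residue $3/\pi$, and the constant term collects $\tfrac{\pi}{3}\mathcal{C}$ from \eqref{nextlemmaainfty} together with the contribution of $F'(1)/F(1)$, which produces $\log((n,N/n)/(nN))$ from the power and the logarithmic-derivative sums $-\sum_{p\mid N}\tfrac{2\log p}{p^2-1}$ and $+\sum_{p\mid N/n}\tfrac{2\log p}{p-1}$ from the two finite Euler products. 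This yields the closed form
\begin{align*}
\mathcal{C}_{a\infty}=\frac1{v_\Gamma}\bigg(\frac{\pi}{3}\,\mathcal{C}+\log\Big(\frac{(n,N/n)}{nN}\Big)-\sum_{p\mid N}\frac{2\log p}{p^2-1}+\sum_{p\mid N/n}\frac{2\log p}{p-1}\bigg),
\end{align*}
where $\mathcal{C}$ is the scattering constant of $\PSLZ$; observe that $\mathcal{C}_{a\infty}$ depends only on the denominator $n$.

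I would then sum over all cusp classes $a\neq\infty$. By \eqref{numberofcusp} there are exactly $\phi((n,N/n))$ inequivalent cusps with a fixed denominator $n\mid N$, so $\sum_{a\neq\infty}\mathcal{C}_{a\infty}=\frac1{v_\Gamma}\sum_{n\mid N,\,n\neq 1}\phi((n,N/n))\,S_n$, where $S_n$ is the parenthesis in the closed form above. Multiplying by $4\pi(1-g_\Gamma)$ and using \eqref{volgeGamma} (so that $4\pi(1-g_\Gamma)/v_\Gamma\to -1$), the assertion becomes $\sum_{n\mid N,\,n\neq 1}\phi((n,N/n))\,S_n=o(g_\Gamma\log N)$. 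Here $|S_n|=O(\log N)$ uniformly in $n$: the term $\tfrac{\pi}{3}\mathcal{C}$ and the convergent sum $\sum_{p\mid N}\tfrac{2\log p}{p^2-1}$ are $O(1)$; one has $|\log((n,N/n)/(nN))|\le 2\log N$ since $1\le(n,N/n)\le\sqrt N$ and $N\le nN\le N^2$; and $\sum_{p\mid N/n}\tfrac{2\log p}{p-1}\le\sum_{p\mid N}\tfrac{2\log p}{p-1}=O(\log\log N)$ because $\sum_{p\mid N}\tfrac{\log p}{p}=O(\log\log N)$. Hence the sum in question is $O(p_\Gamma\log N)$, and since $p_\Gamma=o(N)$ (indeed $p_\Gamma=O(N^{1/2+\varepsilon})$ for every $\varepsilon>0$ by \eqref{numberofcusp} and $\phi((n,N/n))\le\sqrt N$) whereas $g_\Gamma\asymp v_\Gamma$ by \eqref{volgeGamma} and $v_\Gamma\gg N$ by \eqref{volumeGammaN}, this is $o(g_\Gamma\log N)$, as required.

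The conceptual work is essentially confined to the middle step: reading off the scattering function of \cite{DI82} in the cusp-pair normalization used here and carrying out the Laurent-expansion bookkeeping — in particular verifying that the residue comes out to $1/v_\Gamma$ (so that the prefactor $\phi(n)/\phi((n,N/n))$ does not leak into the constant term) and getting the signs of the two logarithmic-derivative sums right. Once the closed form for $\mathcal{C}_{a\infty}$ is secured, the remaining estimates are routine applications of the elementary bounds for $\Gamma_0(N)$ (the divisor bound, Mertens' estimate for the prime divisors of $N$, and the volume/genus asymptotics already recorded in Section~\ref{BM}).
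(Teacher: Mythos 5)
Your proposal is correct and follows essentially the same route as the paper: the explicit Deshouillers--Iwaniec scattering function, the Laurent/Taylor bookkeeping at $s=1$ yielding the same closed form for $\mathcal{C}_{a\infty}$ depending only on the denominator $n$, the summation over cusp classes via \eqref{numberofcusp}, and the final estimate from \eqref{volgeGamma} and $\sum_{p\mid N}\log p/p=O(\log\log N)$. Your last step is in fact slightly more explicit than the paper's (the uniform bound $|S_n|=O(\log N)$ and $p_\Gamma=O(N^{1/2+\varepsilon})$ versus $g_\Gamma\gg N$), which is a welcome clarification rather than a deviation.
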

\begin{lem}\label{Lemma3Gammagen0N}
Let $\mathcal{C}_{a0}$ denote the scattering constant with respect to the cusps $a=m\slash n$ and  $0$. Then
\begin{align*}
4\pi (1-g_Y)\sum_{\substack{a\in \mathcal{P}_{\Gamma}\\ a\not=0}}\mathcal{C}_{a0}= o(g_Y \log N)\,\ \text{as}\,\ N\to \infty.
\end{align*}
\begin{proof}
Recalling the definition of the scattering constant \eqref{scatteringconstantdef}, we can write 
\begin{align*}
\mathcal{C}_{a0}=\lim_{s\to 1}\bigg( \varphi_{a0}(s)-\frac{1}{(s-1)v_Y}\bigg),
\end{align*}
where $\varphi_{a0}(s)$ is the scattering function  with respect to the cusps $a=m\slash n$ and $0$ (see \eqref{phia0Gammagen0N})
\begin{align*}
\varphi_{a0}(s)=\sqrt{\pi}\frac{\Gamma(s-\frac{1}{2})}{\Gamma(s)}\frac{\zeta(2s-1)}{\zeta(2s)}\frac{\phi(N\slash n)}{\phi(n, N\slash n)}\,G(s),
\end{align*}
where
\begin{align*}
G(s)=\frac{(n^2,N)^s}{N^{2s}}\prod_{\substack{p|N\\ p\,\text{prime}}}\frac{p^{2s}}{p^{2s}-1}\prod_{\substack{q|n\\ q\,\text{prime} }}\left(1-\frac{1}{q^{2s-1}}\right).
\end{align*}
Hence we have
\begin{align*}
\mathcal{C}_{a0}=\frac{\phi(N\slash n)}{\phi(n, N\slash n)}\lim_{s\to 1}\bigg( \sqrt{\pi}\frac{\Gamma(s-\frac{1}{2})}{\Gamma(s)}\frac{\zeta(2s-1)}{\zeta(2s)}\,G(s)-\frac{1}{(s-1)v_Y}\bigg).
\end{align*}

At $s=1$ we compute the following Taylor expansions 
\begin{align*}
&\frac{(n^2,N)^s}{N^{2s}}=\frac{(n^2,N)}{N^{2}}+\frac{(n^2,N)}{N^{2}}\log \bigg(\frac{(n^2,N)}{N^{2}}\bigg)(s-1)+ O\left((s-1)^2\right),\\\\
&\prod_{\substack{p|N\\ p\,\text{prime}}}\frac{p^{2s}}{p^{2s}-1}=\prod_{\substack{p|N\\ p\,\text{prime}}}\frac{p^{2}}{p^{2}-1}-\prod_{\substack{p|N\\ p\,\text{prime}}}\frac{p^{2}}{p^{2}-1}\sum_{\substack{q|N\\q\, \text{prime}}}\frac{2\log q}{q^2-1}(s-1)+O\left((s-1)^2\right),\\\\
&\prod_{\substack{q|n\\ q\,\text{prime} }}\bigg(1-\frac{1}{q^{2s-1}}\bigg)=\prod_{\substack{q|n\\ q\,\text{prime} }}\bigg(1-\frac{1}{q}\bigg)+\prod_{\substack{q|n\\ q\,\text{prime} }}\bigg(1-\frac{1}{q}\bigg)\sum_{\substack{p|N\\ p\,\text{prime}}}\frac{2\log p}{p-1}(s-1)+O\left((s-1)^2\right).
\end{align*}
Then, by recalling (\ref{nextlemmaainfty}), we get
\begin{align*}
\mathcal{C}_{a0}=\frac{1}{v_Y}\bigg(\frac{\pi}{3}\mathcal{C}+\log \bigg(\frac{(n^2,N)}{N^2}\bigg)-\sum_{\substack{p|N\\ p\,\text{prime}}}\frac{2\log p}{p^2-1}+\sum_{\substack{p|n\\ p\,\text{prime}}}\frac{2\log p}{p-1}\bigg).
\end{align*}
Then, from (\ref{numberofcusp}), we get
\begin{align*}
&4\pi (1-g_Y)\sum_{\substack{a\in \mathcal{P}_{\Gamma}\\ a\not=0}}\mathcal{C}_{a0}\\&=\frac{4 \pi (1-g_Y)}{v_Y}\bigg(\frac{\pi}{3}\mathcal{C}-\sum_{\substack{p|N\\ p\,\text{prime}}}\frac{2\log p}{p^2-1}\bigg)\sum_{\substack{n|N\\n\not=N}}\phi\left(\left(n, N\slash n\right)\right)\notag\\
&+\frac{4\pi (1-g_Y)}{v_Y}\sum_{\substack{n|N\\n\not=N}}\phi\left(\left(n, N\slash n\right)\right)\bigg(\log \bigg(\frac{(n^2,N)}{N^2}\bigg)+\sum_{\substack{p|n\\ p\,\text{prime}}}\frac{2\log p}{p-1}\bigg).
\end{align*}
Finally, using (\ref{volgeGamma}), and taking into account that $\displaystyle\sum_{\substack{p|N\\ p\,\text{prime}}}\frac{\log p}{p}= O(\log \log N)$, we have
\begin{align*}
4\pi (1-g_Y)\sum_{\substack{a\in \mathcal{P}_{\Gamma}\\ a\not=0}}\mathcal{C}_{a0}= o(g_Y \log N)\,\ \text{as}\,\ N\to \infty.
\end{align*}
This completes the proof.
\end{proof}
\end{lem}
\section{Bounds for Kronecker limit functions of \texorpdfstring{$\Gamma_0(N)$}{}}\label{BKLFGN}
 Here we prove a relation between the Kronecker limit functions of $\Gamma=\Gamma_0(N)$ and $\PSLZ$. For notational convenience, in this section by $\mathcal{K}_\infty(z)_{\Gamma(1)}$ resp.~${E}_\infty(z,s)_{\Gamma(1)}$ we denote the Kronecker limit function resp.~the Eisenstein series with respect to the cusp $\infty$ of the group $\Gamma(1)=\PSLZ$.
Then we derive bounds for Kronecker limit functions of $\Gamma_0(N)$. 
\begin{lem}\label{lemma3N}
Let $\mathcal{K}_{0}(z)$ and $\mathcal{K}_{\infty}(z)$ denote the Kronecker limit functions with respect to the cusps $0$ and $\infty$ of the group $\Gamma=\Gamma_0(N)$. Let $\mathcal{K}_\infty(z)_{\Gamma(1)}$ denote the Kronecker limit function with respect to the cusp $\infty$ of the group $\Gamma(1)=\PSLZ$. Then 
\begin{align*}
&\mathcal{K}_{0}(z)+\mathcal{K}_{\infty}(z)+\frac{2}{v_Y}\bigg(\log N-\sum_{\substack{p|N\\ p\,\mathrm{prime}}}\frac{\log p}{p+1}\bigg)\notag\\
&=\frac{1}{N}\prod_{\substack{p|N\\ p\,\mathrm{prime}}}\frac{p^2}{p^2-1}\sum_{\substack{d|N\\d>0}} \frac{\mu(d)}{d}\bigg(\mathcal{K}_\infty(Nz\slash d)_{\Gamma(1)}+ \mathcal{K}_\infty(d z)_{\Gamma(1)}\bigg),
\end{align*}
where $\mu(d)$ is the M\"obius function.
\begin{proof}
From \cite{GZ}, p.~240, we know the following formula
\begin{align}\label{Eisensteinrelation}
{E}_\infty(z,s)= N^{-s}\prod_{\substack{p|N\\ p\,\text{prime}}}\frac{p^{2s}}{p^{2s}-1} \sum_{\substack{d|N\\d>0}}\frac{\mu(d)}{d^s}{E}_\infty\big(Nz\slash d,s\big)_{\Gamma(1)},
\end{align}
where in the left hand side ${E}_\infty(z,s)$ denotes the Eisenstein series for $\Gamma$ with respect to the cusp $\infty$ and in the right hand side $E_\infty(Nz\slash d,s)_{\Gamma(1)}$ denotes the Eisenstein series for $\Gamma(1)$ with respect to the cusp $\infty$.

\vspace{0.2cm} \noindent
Now, using formula (\ref{Eisensteinrelation}), we can write
\begin{align}\label{KLFN}
\mathcal{K}_{\infty}(z)=\lim_{s\to 1}\bigg( N^{-s}\prod_{\substack{p|N\\ p\,\text{prime}}}\frac{p^{2s}}{p^{2s}-1} \sum_{\substack{d|N\\d>0}}\frac{\mu(d)}{d^s}{E}_\infty\big(Nz\slash d,s\big)_{\Gamma(1)}-\frac{1}{(s-1)v_Y}\bigg).
\end{align}
To compute this limit (\ref{KLFN}), we use the following expansions.
\begin{align*}
{E}_\infty\big(Nz\slash d,s\big)_{\Gamma(1)}&= \frac{3}{\pi(s-1)}+ \mathcal{K}_\infty(Nz \slash d)_{\Gamma(1)}+O(s-1)\,\ \text{as}\,\ s\to 1,
\end{align*} 
which is the well-known Laurent expansion of the Eisenstein series. 

\vspace{0.2cm} \noindent
At $s=1$ we compute the following Taylor series expansions:
\begin{align*}
&\frac{\mu(d)}{d^s}= \frac{\mu(d)}{d}-\frac{\mu(d) \log d}{d}(s-1)+O\big((s-1)^2\big),\notag\\
&\prod_{\substack{p|N\\ p\,\text{prime}}}\frac{p^{2s}}{p^{2s}-1}= \prod_{\substack{p|N\\ p\,\text{prime}}}\frac{p^2}{p^2-1}-\prod_{\substack{p|N\\ p\,\text{prime}}}\frac{p^2}{p^2-1}\sum_{\substack{q|N\\ q\, \text{prime}}}\frac{2\log q}{q^2-1}(s-1)+O((s-1)^2),\\
&\frac{1}{N^s}= \frac{1}{N}-\frac{\log N}{N}(s-1)+O\left((s-1)^2\right).
\end{align*}
Then from (\ref{KLFN}), we get
\begin{align}\label{KLFinftyN}
&\mathcal{K}_{\infty}(z)+\frac{1}{v_Y}\bigg(\log N-\sum_{\substack{p|N\\ p\,\text{prime}}}\frac{\log p}{p+1}\bigg)=\frac{1}{N}\prod_{\substack{p|N\\ p\,\text{prime}}}\frac{p^2}{p^2-1}\sum_{\substack{d|N\\d>0}} \frac{\mu(d)}{d}\mathcal{K}_\infty(Nz\slash d)_{\Gamma(1)}.
\end{align}
Now, to compute $\mathcal{K}_{0}(z)$, we consider a scaling matrix of the cusp $0$ as
\begin{align*}
\sigma_0=\bigg( \begin{array}{ccc} 0 &-{1}\slash{\sqrt{N}}\\
 \sqrt{N} & 0  \end{array}\bigg).
 \end{align*}
Then using the Fricke involution, we get 
\begin{align*}
{E}_0(z, s)= {E}_\infty(\sigma_0^{-1}z, s).
\end{align*}
Then we have
\begin{align}\label{KLF0N}
&\mathcal{K}_{0}(z)= \mathcal{K}_{\infty}(\sigma_0^{-1}z)=\mathcal{K}_{\infty}\big({-1}\slash {Nz}\big)\notag\\
&= \frac{1}{N}\prod_{\substack{p|N\\ p\,\text{prime}}}\frac{p^2}{p^2-1}\sum_{\substack{d|N\\d>0}} \frac{\mu(d)}{d}\mathcal{K}_\infty(d z)_{\Gamma(1)}-\frac{1}{v_Y}\bigg(\log N-\sum_{\substack{p|N\\ p\, \text{prime}}}\frac{\log p}{p+1}\bigg).
\end{align}
Finally, combining (\ref{KLFinftyN}) and (\ref{KLF0N}), we can write
\begin{align*}
&\mathcal{K}_{0}(z)+\mathcal{K}_{\infty}(z)+\frac{2}{v_Y}\bigg(\log N-\sum_{\substack{p|N\\ p\,\text{prime}}}\frac{\log p}{p+1}\bigg)\notag\\
&=\frac{1}{N}\prod_{\substack{p|N\\ p\,\text{prime}}}\frac{p^2}{p^2-1}\sum_{\substack{d|N\\d>0}} \frac{\mu(d)}{d}\bigg(\mathcal{K}_\infty(Nz\slash d)_{\Gamma(1)}+ \mathcal{K}_\infty(d z)_{\Gamma(1)}\bigg).
\end{align*}
This completes the proof.
\end{proof}
\end{lem}
\begin{lem}\label{lemma3N21}
 Let $\mathcal{K}_\infty(z)_{\Gamma(1)}$ denote the Kronecker limit function with respect to the cusp $\infty$ of the group $\Gamma(1)=\PSLZ$. Let $\nu_2$ denote the number of elliptic fixed points of $\Gamma$ with $\ord(e_j)= 2$. Then
\begin{align*}
&\frac{(1-g_Y)}{N}\sum_{j=1}^{\nu_2} \left( 1- \frac{1}{\ord(e_j)}\right)\prod_{\substack{p|N\\ p\,\mathrm{prime}}}\frac{p^2}{p^2-1}\sum_{\substack{d|N\\d>0}} \frac{\mu(d)}{d}\bigg(\mathcal{K}_\infty(Ne_j\slash d)_{\Gamma(1)}+ \mathcal{K}_\infty(d e_j)_{\Gamma(1)}\bigg)\notag\\
&=o(g_Y \log N)\,\ \text{as}\,\ N \to \infty,
\end{align*}
where $\mu(d)$ is the M\"obius function.
\begin{proof}
From (\ref{nfori}), we recall that if $e_j$ be an elliptic fixed point of $\Gamma$ with $\ord(e_j)=2$, then
\begin{align*}
e_j= \frac{n+i}{n^2+1}\,\ \text{for}\,\ n=0,\dotsc, N-1 \,\ \text{such that}\,\ n^2+1 \equiv 0 \,(\text{mod}\, N).
\end{align*}
By substituting these values of $e_j$ in the term $\mathcal{K}_\infty(Ne_j\slash d)_{\Gamma(1)}+ \mathcal{K}_\infty(d e_j)_{\Gamma(1)}$ we get our desired asymptotics. 
As recalled in subsection \ref{Eis}, we have  
\begin{align}\label{KLFPSL}
&\mathcal{K}_\infty(z)_{\Gamma(1)}=-\frac{1}{2\pi}\log(|\Delta(z)|y^{6} )+ \mathcal{C},
\end{align}
where $z=x+iy$ and $\Delta(z)$ denotes the modular discriminant, and $\mathcal{C}$ denotes the scattering constant \eqref{scatteringconstantPSLZ}. Using the Fourier expansion 
\begin{align*}
\Delta(z)&= \sum_{n=1}^\infty \tau(n)e^{2\pi i n z},
\end{align*} 
for any positive integer $d$, we derive the bound
\begin{align}
\log|\Delta(dz)|&\leq - 2\pi d y+ \log\bigg(1+ \sum_{n=1}^\infty |\tau(n+1)|e^{-2\pi  n d y}\bigg)\label{logdelta}.
\end{align}

\vspace{0.2cm} \noindent
When $e_j$ is an elliptic fixed point of $\Gamma$ with $\ord(e_j)=2$, then using (\ref{nfori}) and (\ref{KLFPSL}), we get
\begin{align}\label{iN}
\mathcal{K}_\infty(Ne_j\slash d)_{\Gamma(1)}+ \mathcal{K}_\infty(d e_j)_{\Gamma(1)}=& \frac{-3}{\pi}\log\bigg(\frac{N}{(n^2+1)^2}\bigg)-\frac{1}{2\pi}\log \bigg|\Delta\bigg(\frac{N(n+i)}{d(n^2+1)}\bigg)\bigg|\notag\\&-\frac{1}{2\pi}\log \bigg|\Delta\bigg(\frac{nd+id}{n^2+1}\bigg)\bigg|
+2\mathcal{C},
\end{align}
where $n$ satisfies the conditions given in (\ref{nfori}), $d|N\,(\text{with}\, d>0)$. 

\vspace{0.2cm} \noindent
When $n=0$ in (\ref{iN}), we get
\begin{align*}
&\mathcal{K}_\infty(Ne_j\slash d)_{\Gamma(1)}+ \mathcal{K}_\infty(d e_j)_{\Gamma(1)}= \frac{-3}{\pi}\log({N})-\frac{1}{2\pi}\log \big(|\Delta(Ni\slash d)\Delta(di)|\big)+2 \mathcal{C}.
\end{align*} 
Now, note that using (\ref{logdelta}), we can write 
\begin{align*}
\log \big(|\Delta(Ni\slash d)|\big)=O(N \slash d),\,\ \,\ \log \big(|\Delta(di)|\big)=O(d).
\end{align*}
This implies, there exist some constants $c_1, c_2$ independent of $N$, $d$, and $j$, such that 
\begin{align*}
\mathcal{K}_\infty(Ne_j\slash d)_{\Gamma(1)}+ \mathcal{K}_\infty(d e_j)_{\Gamma(1)}= \frac{c_1N}{d}+ c_2 d+ o(N)\,\ \text{as}\,\ N\to \infty.
\end{align*}

\noindent
For $n=1,\dotsc, N-1 \,\ \text{such that}\,\ n^2+1 \equiv 0 \,(\text{mod}\, N)$ in (\ref{iN}), we get
\begin{align*}
&\mathcal{K}_\infty(Ne_j\slash d)_{\Gamma(1)}+ \mathcal{K}_\infty(d e_j)_{\Gamma(1)}= \frac{c_3N}{d(n^2+1)}+\frac{c_4 d}{n^2+1} + o(N)\,\ \text{as}\,\ N\to \infty,
\end{align*}
where the constants $c_3, c_4$ are independent of $N, d, n$, and $j$.

\vspace{0.2cm} \noindent
Now, using the fact that $\displaystyle\sum_{\substack{d|N \\ d>0}}\frac{\mu(d)}{d}=\prod_{\substack{p|N\\ p\,\text{prime}}}\frac{p-1}{p}$, we can write
\begin{align}\label{halfbound}
&\frac{1}{N}\sum_{j=1}^{\nu_2} \left( 1- \frac{1}{\ord(e_j)}\right)\prod_{\substack{p|N\\ p\,\text{prime}}}\frac{p^2}{p^2-1}\sum_{\substack{d|N\\d>0}} \frac{\mu(d)}{d}\bigg(\mathcal{K}_\infty(Ne_j\slash d)_{\Gamma(1)}+ \mathcal{K}_\infty(d e_j)_{\Gamma(1)}\bigg)\notag\\
&=\frac{1}{2N}\prod_{\substack{p|N\\ p\,\text{prime}}}\frac{p^2}{p^2-1}\sum_{\substack{d|N\\d>0}} \frac{\mu(d)}{d}\sum_{j=1}^{\nu_2} \bigg(\mathcal{K}_\infty(Ne_j\slash d)_{\Gamma(1)}+ \mathcal{K}_\infty(d e_j)_{\Gamma(1)}\bigg)\notag\\
&=\frac{1}{2N}\prod_{\substack{p|N\\ p\,\text{prime}}}\frac{p^2}{p^2-1}\sum_{\substack{d|N\\d>0}} \frac{\mu(d)}{d}\bigg(\frac{c_1N}{d}+c_2 d\bigg)\notag\\
&\hspace{.4cm}+\frac{1}{2N}\prod_{\substack{p|N\\ p\,\text{prime}}}\frac{p^2}{p^2-1}\sum_{\substack{d|N\\d>0}}\frac{\mu(d)}{d} \sum_{n=1}^{N-1}\bigg(\frac{c_3N}{d(n^2+1)}+\frac{c_4 d}{n^2+1}\bigg)+o(N)\notag\\
&=\frac{c_1}{2}+c_3+o(N)\,\ \text{as}\,\ N\to \infty.
\end{align}
Note that, in the last inequality of (\ref{halfbound}) we have used a well-known property of the M\"obius function which is $\displaystyle\sum_{\substack{d|N\\ d>0}}\frac{\mu(d)}{d^2}=\prod_{\substack{p|N\\ p\,\text{prime}}}\frac{p^2-1}{p^2}$. Then using (\ref{halfbound}), we get
\begin{align*}
&\frac{(1-g_Y)}{N}\sum_{j=1}^{\nu_2} \left( 1- \frac{1}{\ord(e_j)}\right)\prod_{\substack{p|N\\ p\,\text{prime}}}\frac{p^2}{p^2-1}\sum_{\substack{d|N\\d>0}} \frac{\mu(d)}{d}\bigg(\mathcal{K}_\infty(Ne_j\slash d)_{\Gamma(1)}+ \mathcal{K}_\infty(d e_j)_{\Gamma(1)}\bigg)\notag\\
&=o(g_Y \log N)\,\ \text{as}\,\ N \to \infty.
\end{align*}
This completes the proof.
\end{proof}
\end{lem}
\begin{lem}\label{lemma3N22}
 Let $\mathcal{K}_\infty(z)_{\Gamma(1)}$ denote the Kronecker limit function with respect to the cusp $\infty$ of the group $\Gamma(1)=\PSLZ$. Let $\nu_3$ denote the number of elliptic fixed points of $\Gamma$ with $\ord(e_j)= 3$. Then
\begin{align*}
&\frac{(1-g_Y)}{N}\sum_{j=1}^{\nu_3} \left( 1- \frac{1}{\ord(e_j)}\right)\prod_{\substack{p|N\\ p\,\mathrm{prime}}}\frac{p^2}{p^2-1}\sum_{\substack{d|N\\d>0}} \frac{\mu(d)}{d}\bigg(\mathcal{K}_\infty(Ne_j\slash d)_{\Gamma(1)}+ \mathcal{K}_\infty(d e_j)_{\Gamma(1)}\bigg)\notag\\
&=o(g_Y \log N)\,\ \text{as}\,\ N \to \infty,
\end{align*}
where $\mu(d)$ is the M\"obius function.
\begin{proof}
The proof is similar to the proof of Lemma \ref{lemma3N21}. From (\ref{nforother}), we recall that if $e_j$ be an elliptic fixed point of $\Gamma$ with $\ord(e_j)=3$, then
\begin{align*}
e_j= \frac{n+\frac{1+i\sqrt{3}}{2}}{n^2-n+1}\,\ \text{for}\,\ n=0,\dotsc, N-1 \,\ \text{such that}\,\ n^2-n+1 \equiv 0 \,(\text{mod}\, N).
\end{align*}
By substituting these values of $e_j$ in the term $\mathcal{K}_\infty(Ne_j\slash d)_{\Gamma(1)}+ \mathcal{K}_\infty(d e_j)_{\Gamma(1)}$, we get
\begin{align}\label{otherellipticN}
&\mathcal{K}_\infty(Ne_j\slash d)_{\Gamma(1)}+ \mathcal{K}_\infty(d e_j)_{\Gamma(1)}\notag\\
&= \frac{-3}{\pi}\log\bigg(\frac{3N}{(2n^2-2n+2)^2}\bigg)-\frac{1}{2\pi}\log \bigg|\Delta\bigg(\frac{N(2n+1+i\sqrt{3})}{2d(n^2-n+1)}\bigg)\bigg|\notag\\& \hspace{.4cm}-\frac{1}{2\pi}\log \bigg|\Delta\bigg(\frac{d(2n+1+i\sqrt{3})}{2(n^2-n+1)}\bigg)\bigg|
+2\mathcal{C},
\end{align}
where $n$ satisfies the conditions given in (\ref{nforother}), $d|N$ and $\mathcal{C}$ is the scattering constant \eqref{scatteringconstantPSLZ}. 

\vspace{0.2cm} \noindent
For $n=0$ in (\ref{otherellipticN}), we get
\begin{align*}
\mathcal{K}_\infty(Ne_j\slash d)_{\Gamma(1)}+ \mathcal{K}_\infty(d e_j)_{\Gamma(1)}=\frac{-3}{\pi}\log\bigg(\frac{3N}{4}\bigg)&-\frac{1}{2\pi}\log \bigg|\Delta\bigg(\frac{N\sqrt{3}i}{2d}\bigg)\Delta\bigg(\frac{d\sqrt{3}i}{2}\bigg)\bigg|+2\mathcal{C}.
\end{align*}
Then, using (\ref{logdelta}), we can write
\begin{align*}
&\log \bigg|\Delta\bigg(\frac{N\sqrt{3}i}{2d}\bigg)\bigg|=O(N\slash d),\,\ \,\ \log \bigg|\Delta\bigg(\frac{d\sqrt{3}i}{2}\bigg)\bigg|=O(d).
\end{align*}
This implies, there exist some constants $c'_1, c'_2$, independent of $N, d$ and $j$, such that 
\begin{align*}
&\mathcal{K}_\infty(Ne_j\slash d)_{\Gamma(1)}+ \mathcal{K}_\infty(d e_j)_{\Gamma(1)}=\frac{c'_1N}{d}+c'_2 d + o(N)\,\ \text{as}\,\ N\to \infty.
\end{align*}
When $n=1,\dotsc, N-1 \,\ \text{such that}\,\ n^2-n+1 \equiv 0 \,(\text{mod}\, N)$ in (\ref{otherellipticN}), we get
\begin{align*}
&\mathcal{K}_\infty(Ne_j\slash d)_{\Gamma(1)}+ \mathcal{K}_\infty(d e_j)_{\Gamma(1)}= \frac{c'_3N}{d(n^2-n+1)}+\frac{c'_4 d}{(n^2-n+1)} + o(N)\,\ \text{as}\,\ N\to \infty,
\end{align*}
where the constants $c'_3, c'_4$ are independent of $N, d, n$ and $j$.

\vspace{0.2cm} \noindent
Now, using the fact that $\displaystyle\sum_{\substack{d|N\\ d>0}}\frac{\mu(d)}{d}=\prod_{\substack{p|N\\ p\,\text{prime}}}\frac{p-1}{p}$, we can write
\begin{align}\label{otherhalfbound}
&\frac{1}{N}\sum_{j=1}^{\nu_3} \left( 1- \frac{1}{\ord(e_j)}\right)\prod_{\substack{p|N\\ p\,\text{prime}}}\frac{p^2}{p^2-1}\sum_{\substack{d|N\\d>0}} \frac{\mu(d)}{d}\bigg(\mathcal{K}_\infty(Ne_j\slash d)_{\Gamma(1)}+ \mathcal{K}_\infty(d e_j)_{\Gamma(1)}\bigg)\notag\\
&=\frac{2}{3N}\prod_{\substack{p|N\\ p\,\text{prime}}}\frac{p^2}{p^2-1}\sum_{\substack{d|N\\d>0}} \frac{\mu(d)}{d}\sum_{j=1}^{\nu_3} \bigg(\mathcal{K}_\infty(Ne_j\slash d)_{\Gamma(1)}+ \mathcal{K}_\infty(d e_j)_{\Gamma(1)}\bigg)\notag\\
&=\frac{2}{3N}\prod_{\substack{p|N\\ p\,\text{prime}}}\frac{p^2}{p^2-1}\sum_{\substack{d|N\\d>0}} \frac{\mu(d)}{d}\bigg(\frac{c'_1N}{d}+c'_2 d\bigg)\notag\\
&\hspace{.4cm}+\frac{2}{3N}\prod_{\substack{p|N\\ p\,\text{prime}}}\frac{p^2}{p^2-1}\sum_{\substack{d|N\\d>0}}\frac{\mu(d)}{d} \sum_{n=1}^{N-1}\bigg(\frac{c'_3N}{d(n^2-n+1)}+\frac{c'_4 d}{n^2-n+1}\bigg)+o(N)\notag\\
&=\frac{c'_1}{2}+c'_3+o(N)\,\ \text{as}\,\ N\to \infty.
\end{align}
Note that in the last inequality of (\ref{otherhalfbound}) we have used a well-known property of the M\"obius function which is $\displaystyle\sum_{\substack{d|N\\d>0}}\frac{\mu(d)}{d^2}=\prod_{\substack{p|N\\ p\,\text{prime}}}\frac{p^2-1}{p^2}$. Then using (\ref{halfbound}), we get
\begin{align*}
&\frac{(1-g_Y)}{N}\sum_{j=1}^{\nu_3} \left( 1- \frac{1}{\ord(e_j)}\right)\prod_{\substack{p|N\\ p\,\text{prime}}}\frac{p^2}{p^2-1}\sum_{\substack{d|N\\d>0}} \frac{\mu(d)}{d}\bigg(\mathcal{K}_\infty(Ne_j\slash d)_{\Gamma(1)}+ \mathcal{K}_\infty(d e_j)_{\Gamma(1)}\bigg)\notag\\
&=o(g_Y \log N)\,\ \text{as}\,\ N \to \infty.
\end{align*}
This completes the proof.
\end{proof}
\end{lem}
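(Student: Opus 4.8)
The plan is to follow the proof of Lemma~\ref{lemma3N21} essentially verbatim, the only structural changes being that the parametrization (\ref{nfori}) of the order-two elliptic fixed points is replaced by the parametrization (\ref{nforother}) of the order-three ones, and that the weight $1-1/\ord(e_j)=1/2$ is replaced by $1-1/\ord(e_j)=2/3$. First I would substitute $e_j=\frac{2n+1+i\sqrt{3}}{2(n^2-n+1)}$, with $n$ running over the residues in $\{0,\dots,N-1\}$ for which $N\mid n^2-n+1$, into the closed form (\ref{KLFPSL}) for $\mathcal{K}_\infty(\,\cdot\,)_{\Gamma(1)}$. Since $\Im(Ne_j/d)=\frac{\sqrt{3}\,N}{2d(n^2-n+1)}$ and $\Im(de_j)=\frac{\sqrt{3}\,d}{2(n^2-n+1)}$, the parameter $d$ cancels out of the combined logarithm of the two imaginary parts, leaving the explicit identity (\ref{otherellipticN}): a ``logarithmic term'' $-\frac{3}{\pi}\log\big(3N/(2n^2-2n+2)^2\big)$ that is independent of $d$, together with two ``discriminant terms'' $-\frac{1}{2\pi}\log\big|\Delta(Ne_j/d)\big|$ and $-\frac{1}{2\pi}\log\big|\Delta(de_j)\big|$.

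The discriminant terms are then controlled with the Fourier expansion (\ref{fourierdelta}), as summarised in (\ref{logdelta}); the case $n=0$ (where both imaginary parts are $\ge\sqrt{3}/2$, so the Fourier tail is an absolute constant) gives $\log|\Delta(Ne_j/d)|=O(N/d)$ and $\log|\Delta(de_j)|=O(d)$, and for the remaining $n$ (for which $N\le n^2-n+1<N^2$) one obtains in the same way bounds of size $O\big(N/(d(n^2-n+1))\big)$ and $O\big(d/(n^2-n+1)\big)$ with absolute implied constants — this leads to $\mathcal{K}_\infty(Ne_j/d)_{\Gamma(1)}+\mathcal{K}_\infty(de_j)_{\Gamma(1)}\le\frac{c_1'N}{d}+c_2'd$ for $n=0$ and $\le\frac{c_3'N}{d(n^2-n+1)}+\frac{c_4'd}{n^2-n+1}$ for $n\ge1$, exactly as in the order-two case. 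One then sums over $d\mid N$ with weight $\mu(d)/d$ and over the admissible $n$, overcounting the latter by $n=1,\dots,N-1$ (legitimate since $\sum_{n\ge1}(n^2-n+1)^{-1}<\infty$): using $\sum_{d\mid N}\mu(d)/d=\prod_{p\mid N}(1-1/p)$ and $\sum_{d\mid N}\mu(d)/d^2=\prod_{p\mid N}(1-1/p^2)$, the Euler products combine against $\prod_{p\mid N}\frac{p^2}{p^2-1}$ and the whole quantity is $O(1)$ as $N\to\infty$ — this is the computation (\ref{otherhalfbound}). The logarithmic term is handled separately: being independent of $d$, its $d$-sum is again $\prod_{p\mid N}(1-1/p)$, and since there are only $\nu_3=O(N^{\varepsilon})$ order-three points, each contributing an $O(\log N)$ term, its contribution to the left-hand side of the lemma (before the factor $1-g_\Gamma$) is $O(N^{\varepsilon-1}\log N)=o(1)$.

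Combining the two parts, the left-hand side of the lemma without the factor $1-g_\Gamma$ is $O(1)$, so multiplying back by $1-g_\Gamma$ — which is $O(g_\Gamma)$ by (\ref{volgeGamma}) — produces a quantity that is $O(g_\Gamma)=o(g_\Gamma\log N)$, which is the claim. The only point requiring genuine care, precisely as in Lemma~\ref{lemma3N21}, is ensuring that the estimates for $\log|\Delta|$ at the points $Ne_j/d$ and $de_j$ are uniform in $n$, $d$, $N$ and $j$; this is what dictates the separate treatment of $n=0$ and the repeated use of $N\le n^2-n+1<N^2$, after which everything reduces to the same Möbius-sum bookkeeping as in the order-two argument.
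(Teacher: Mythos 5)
Your proposal is correct and follows essentially the same route as the paper's own proof (which in turn mirrors Lemma \ref{lemma3N21}): you substitute the explicit order-three points from (\ref{nforother}) into the explicit Kronecker limit formula, bound the two discriminant terms via the Fourier expansion of $\Delta$ as in (\ref{logdelta}) (treating $n=0$ and $n\geq 1$ separately, the latter using $n^2-n+1\geq N$), and perform the same M\"obius-sum bookkeeping before multiplying by $1-g_\Gamma=O(g_\Gamma)$ to conclude $o(g_\Gamma\log N)$. Your only deviation is the separate, slightly cleaner treatment of the $d$-independent logarithmic term via $\nu_3=O(N^{\varepsilon})$, which the paper simply absorbs into its $o(N)$ error terms; this is a bookkeeping refinement rather than a different argument.
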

\begin{prop}\label{prop3N}Let $\mathcal{K}_{0}(z)$ and $\mathcal{K}_{\infty}(z)$ denote the Kronecker limit functions for the group $\Gamma=\Gamma_0(N)$ with respect to the cusps $0$ and $\infty$ respectively. Let $\{e_j\}_{j=1}^{e_{\Gamma}}$ be the set of elliptic fixed points of $\Gamma$. Then
\begin{align*}
4\pi(1-g_Y) \sum_{j=1}^{e_{\Gamma}} \left( 1- \frac{1}{\ord(e_j)}\right) \big(\mathcal{K}_{0}(e_j)+\mathcal{K}_{\infty}(e_j)\big) = o(g_Y \log N)\,\ \text{as}\,\ N\to \infty.
\end{align*}
\begin{proof}
Using Lemma \ref{lemma3N}, we can write
\begin{align}\label{twosummened}
& 4\pi(1-g_Y)\sum_{j=1}^{e_{\Gamma}} \left( 1- \frac{1}{\ord(e_j)}\right) \big(\mathcal{K}_{0}(e_j)+\mathcal{K}_{\infty}(e_j)\big)\notag\\
&= \frac{4\pi(1-g_Y)}{N}\sum_{j=1}^{e_{\Gamma}} \left( 1- \frac{1}{\ord(e_j)}\right)\prod_{\substack{p|N\\ p\,\text{prime}}}\frac{p^2}{p^2-1}\sum_{\substack{d|N\\d>0}} \frac{\mu(d)\big(\mathcal{K}_\infty(Ne_j\slash d)_{\Gamma(1)}+ \mathcal{K}_\infty(d e_j)_{\Gamma(1)}\big)}{d}\notag\\
&\hspace{4.6cm}+\frac{8\pi(1-g_Y)}{v_Y}\sum_{j=1}^{e_{\Gamma}} \left( 1- \frac{1}{\ord(e_j)}\right)\bigg(\sum_{\substack{p|N\\ p\,\text{prime}}}\frac{\log p}{p+1}-\log N\bigg).
\end{align}
Using \eqref{volgeGamma}, \eqref{ellipticboundGammaN}, and taking into account the estimate $\displaystyle\sum_{\substack{p|N\\ p\, \text{prime}}}\frac{\log p}{p}= O(\log \log N),$
we get 
\begin{align*}
&\frac{8\pi(1-g_Y)}{v_Y}\sum_{j=1}^{e_{\Gamma}}\left( 1- \frac{1}{\ord(e_j)}\right) \bigg(\sum_{\substack{p|N\\ p\,\text{prime}}}\frac{\log p}{p+1}-\log N\bigg)=o(g_Y\log N)\,\ \text{as}\,\ N\to \infty,
\end{align*}
which is the last line of (\ref{twosummened}). 

\noindent
To complete the proof it suffices to show that
\begin{align*}
&\frac{4\pi(1-g_Y)}{N}\sum_{j=1}^{e_{\Gamma}} \left( 1- \frac{1}{\ord(e_j)}\right)\prod_{\substack{p|N\\ p\,\text{prime}}}\frac{p^2}{p^2-1}\sum_{\substack{d|N\\d>0}} \frac{\mu(d)\big(\mathcal{K}_\infty(Ne_j\slash d)_{\Gamma(1)}+ \mathcal{K}_\infty(d e_j)_{\Gamma(1)}\big)}{d}\notag\\
&=o(g_Y\log N)\,\ \text{as}\,\ N\to \infty.
\end{align*}
Now, note that
\begin{align*}
&\frac{4\pi(1-g_Y)}{N}\sum_{j=1}^{e_{\Gamma}} \left( 1- \frac{1}{\ord(e_j)}\right)\prod_{\substack{p|N\\ p\,\text{prime}}}\frac{p^2}{p^2-1}\sum_{\substack{d|N\\d>0}} \frac{\mu(d)\big(\mathcal{K}_\infty(Ne_j\slash d)_{\Gamma(1)}+ \mathcal{K}_\infty(d e_j)_{\Gamma(1)}\big)}{d}\notag\\
=&\frac{(1-g_Y)}{N}\sum_{j=1}^{\nu_2} \left( 1- \frac{1}{\ord(e_j)}\right)\prod_{\substack{p|N\\ p\,\text{prime}}}\frac{p^2}{p^2-1}\sum_{\substack{d|N\\d>0}} \frac{\mu(d)}{d}\bigg(\mathcal{K}_\infty(Ne_j\slash d)_{\Gamma(1)}+ \mathcal{K}_\infty(d e_j)_{\Gamma(1)}\bigg)\notag\\
&+\frac{(1-g_Y)}{N}\sum_{j=1}^{\nu_3} \left( 1- \frac{1}{\ord(e_j)}\right)\prod_{\substack{p|N\\ p\,\text{prime}}}\frac{p^2}{p^2-1}\sum_{\substack{d|N\\d>0}} \frac{\mu(d)}{d}\bigg(\mathcal{K}_\infty(Ne_j\slash d)_{\Gamma(1)}+ \mathcal{K}_\infty(d e_j)_{\Gamma(1)}\bigg).
\end{align*}
Finally, Lemma \ref{lemma3N21} and Lemma \ref{lemma3N22} completes the proof.
\end{proof}
\end{prop}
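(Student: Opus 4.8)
The plan is to bootstrap from the three lemmas already proved in this section. First I would apply Lemma~\ref{lemma3N} at each elliptic fixed point $e_j$ of $\Gamma=\Gamma_0(N)$, which rewrites $\mathcal{K}_{0}(e_j)+\mathcal{K}_{\infty}(e_j)$ as
\[
\frac{1}{N}\prod_{\substack{p\mid N\\ p\,\text{prime}}}\frac{p^{2}}{p^{2}-1}\sum_{\substack{d\mid N\\ d>0}}\frac{\mu(d)}{d}\bigl(\mathcal{K}_\infty(Ne_j/d)_{\Gamma(1)}+\mathcal{K}_\infty(de_j)_{\Gamma(1)}\bigr)\;-\;\frac{2}{v_\Gamma}\Bigl(\log N-\sum_{\substack{p\mid N\\ p\,\text{prime}}}\frac{\log p}{p+1}\Bigr).
\]
Multiplying by $4\pi(1-g_\Gamma)\bigl(1-1/\ord(e_j)\bigr)$ and summing over $j=1,\dots,e_\Gamma$ then splits the left-hand side of the proposition into a ``$\Gamma(1)$-part'', built only out of the Kronecker limit function of the full modular group, and an ``error part'' equal to a bounded multiple of $\frac{1-g_\Gamma}{v_\Gamma}\sum_{j=1}^{e_\Gamma}\bigl(1-1/\ord(e_j)\bigr)\bigl(\log N-\sum_{p\mid N}\log p/(p+1)\bigr)$.

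For the error part I would invoke the volume asymptotics \eqref{volgeGamma}, which keeps $4\pi(1-g_\Gamma)/v_\Gamma$ under control, together with the trivial bound $0\le 1-1/\ord(e_j)<1$, the elliptic-point count $e_\Gamma=O(N^{\varepsilon})$ from \eqref{ellipticboundGammaN}, and the elementary estimate $\sum_{p\mid N}\log p/p=O(\log\log N)$. These bound the error part by $O(N^{\varepsilon}\log N)$, which is $o(g_\Gamma\log N)$ since $g_\Gamma\asymp v_\Gamma\asymp N\prod_{p\mid N}(1+1/p)$.

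For the $\Gamma(1)$-part I would use that $e_\Gamma=\nu_2+\nu_3$ and that every elliptic fixed point of $\Gamma_0(N)$ has order exactly $2$ or $3$, so the sum over $j$ breaks into the contribution of the $\nu_2$ points of order $2$ and the contribution of the $\nu_3$ points of order $3$. The first contribution is exactly the quantity estimated in Lemma~\ref{lemma3N21}, the second is exactly the quantity estimated in Lemma~\ref{lemma3N22}, and both are $o(g_\Gamma\log N)$. Adding these two contributions to the error part proves the proposition.

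I do not expect a real obstacle at this last step: the genuinely analytic work — bounding $\log|\Delta|$ at the CM points $(n+i)/(n^2+1)$ and $\bigl(n+\tfrac{1+i\sqrt3}{2}\bigr)/(n^2-n+1)$ through the $q$-expansion $\Delta(z)=\sum_{n\ge1}\tau(n)e^{2\pi i n z}$, and then summing the resulting $1/(n^2+1)$- and $1/(n^2-n+1)$-type series against $\mu(d)/d$ — is already done in Lemmas~\ref{lemma3N21} and~\ref{lemma3N22}. The one point that needs care is the bookkeeping: one has to keep the $-\frac{2}{v_\Gamma}(\log N-\cdots)$ term coming from Lemma~\ref{lemma3N} separate and verify directly that, after multiplication by $4\pi(1-g_\Gamma)$ and summation over the $O(N^{\varepsilon})$ elliptic fixed points, it remains $o(g_\Gamma\log N)$, instead of letting it be silently swallowed by the other estimates.
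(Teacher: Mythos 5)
Your proposal is correct and follows essentially the same route as the paper: apply Lemma~\ref{lemma3N} at each elliptic fixed point, absorb the $\frac{2}{v_\Gamma}\bigl(\log N-\sum_{p\mid N}\log p/(p+1)\bigr)$ term via \eqref{volgeGamma}, \eqref{ellipticboundGammaN} and $\sum_{p\mid N}\log p/p=O(\log\log N)$, and then split the remaining $\Gamma(1)$-part into the order-$2$ and order-$3$ contributions handled by Lemmas~\ref{lemma3N21} and~\ref{lemma3N22}. The bookkeeping point you flag about not silently swallowing the $\log N$ error term is exactly the step the paper isolates as the ``second summand'' of \eqref{twosummened}.
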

\section{Proof of Theorem \ref{maintheorem2article}}\label{section8}
Finally, combining Theorem \ref{maintheorem1article} with the results from section \ref{BSCGN} and \ref{BKLFGN} we prove Theorem \ref{maintheorem2article}. 
In this section, we consider $\Gamma=\Gamma_0(N)$ and $Y=\Gamma_0(N)\backslash \mathbb{H}$ with compactification $X_0(N)=\overline{\Gamma\backslash \mathbb{H}}$, where $N$ is a positive integer. By $g_Y$ we denote the genus of $Y$.
We derive an asymptotic expression for the canonical Green's function evaluated at the cusps $0$ and $\infty$ of $X_0(N)$.
\begin{proof}[Proof of Theorem \ref{maintheorem2article}]
From Theorem \ref{maintheorem1article}, we have
\begin{align}\label{fromtheorem}
2g_Y(1-g_Y)\, \mathcal{G}_{\mathrm{can}}(0, \infty)&=8 \pi g_Y(1-g_Y)\, \mathcal{C}_{0\infty}+ 4\pi(1-g_Y)\bigg( \sum_{\substack{a\in \mathcal{P}_{\small{\Gamma}} \\ a\not=\infty}}\mathcal{C}_{a \infty}+\sum_{\substack{a\in \mathcal{P}_{\small{\Gamma}} \\ a\not=0}}\mathcal{C}_{a0}\bigg)\notag \\
&\phantom{=}+4\pi(1-g_Y)\sum_{j=1}^{e_{\Gamma}} \left( 1- \frac{1}{\ord(e_j)}\right)\big(\mathcal{K}_{0}(e_j)
\phantom{=}+\mathcal{K}_{\infty}(e_j)\big)\notag \\&+\frac{8\pi(1-g)c_{Y}}{v_Y}+\delta_{Y},
\end{align}
where the absolute value of $\delta_{Y}$ is bounded by
\begin{align}\label{deltaX}
\frac{8\pi(g_Y-1)}{v_Y}\sum_{j=1}^{e_{\Gamma}} \left( 1+ \frac{1}{\ord(e_j)}\right)+\frac{8(g_Y-1)}{v_Y}\sum_{j=1}^{e_{\Gamma}}(1+\ord(e_j))+\frac{8\pi g_Y(g_Y-1)(d_{Y}+1)^2}{\lambda_1v_{\small{\Gamma}}}\notag\\+\frac{8\pi g_Y(g_Y-1)}{v_Y}+{4(g_Y-1)\log(4 \pi)}
 +\frac{4 p_{\small{\Gamma}}(g_Y-1)}{v_Y}\bigg(\pi+\frac{4\pi^2}{3}+1\bigg).
\end{align}
In our next few steps we show that $\delta_{Y}=o(g_Y \log N) \,\ \text{as}\,\ N \to \infty$. From (\ref{volgeGamma}) and (\ref{ellipticboundGammaN}) it is clear that
\begin{align*}
\frac{8\pi(g_Y-1)}{v_Y}\sum_{j=1}^{e_{\Gamma}} \left( 1+ \frac{1}{\ord(e_j)}\right)+\frac{8(g_Y-1)}{v_Y}\sum_{j=1}^{e_{\Gamma}}(1+\ord(e_j))=O\big(N^\varepsilon\big),
\end{align*}
which are the first two terms of (\ref{deltaX}). 

\vspace{.1cm}\noindent
Then for the third term of \eqref{deltaX} we use \eqref{volgeGamma}, \eqref{dX}, and from \cite{LRS95}, Theorem 1.1, we recall that $
\lambda_1\geq 21\slash 100$. Then we get
\begin{align*}
\frac{8\pi g_Y(g_Y-1)(d_{Y}+1)^2}{\lambda_1v_{\small{\Gamma}}}=O(g_Y).
\end{align*}
 For the fourth and the second last term of \eqref{deltaX}, we use (\ref{volgeGamma}), then we have
\begin{align*}
\frac{8\pi g_Y(g_Y-1)}{v_Y}+{4(g_Y-1)\log(4 \pi)}=O(g_Y).
\end{align*}
For the last term of (\ref{deltaX}), we use (\ref{numberofcusp}), (\ref{volgeGamma}), and we use the well-known identity $\displaystyle\sum\limits_{\substack{d|N\\d>0}} \phi(d)= N$ for the Euler function $\phi$. Then we get
\begin{align*}
\frac{4 p_{\small{\Gamma}}(g_Y-1)}{v_Y}\bigg(\pi+\frac{4\pi^2}{3}+1\bigg)= O(N).
\end{align*}

\vspace{0.2cm} \noindent
Since the estimate (\ref{volgeGamma}) and the formula (\ref{volumeGammaN}) implies that $g_Y= O(N\log N)$, we get 
\begin{align*}
\delta_{Y}=o(g_Y \log N) \,\ \text{as}\,\ N \to \infty.
\end{align*}
For the second last term of \eqref{fromtheorem},
using \eqref{volgeGamma} and \eqref{cX}, we get
\begin{align*}
\frac{8\pi (1-g_Y)c_Y}{v_Y}=o(g_Y \log N) \,\ \text{as}\,\ N \to \infty.
\end{align*}
Then from (\ref{fromtheorem}), as $N\to \infty$, we have
\begin{align}\label{79}
2g_Y(1-g_Y)\, \mathcal{G}_{\mathrm{can}}(0, \infty)&=8 \pi g_Y(1-g_Y)\, \mathcal{C}_{0\infty}+ 4\pi(1-g_Y)\bigg( \sum_{\substack{a\in \mathcal{P}_{\small{\Gamma}} \\ a\not=\infty}}\mathcal{C}_{a \infty}+\sum_{\substack{a\in \mathcal{P}_{\small{\Gamma}} \\ a\not=0}}\mathcal{C}_{a0}\bigg)\notag \\
&+4\pi(1-g_Y)\sum_{j=1}^{e_{\Gamma}}  \left( 1- \frac{1}{\ord(e_j)}\right)\big(\mathcal{K}_{0}(e_j)
+\mathcal{K}_{\infty}(e_j)\big)+ o(g_Y \log N).
\end{align}
From Lemma \ref{Lemma1Gammagen0N}, we have
\begin{align*}
8 \pi g_Y(1-g_Y)\, \mathcal{C}_{0\infty}=2g_Y \log N+o(g_Y \log N) \,\ \text{as}\,\ N \to \infty,
\end{align*}
which is the first term of the right hand side of (\ref{79}). Finally, using Lemma \ref{Lemma2Gammagen0N}, Lemma \ref{Lemma3Gammagen0N}, and Proposition \ref{prop3N}, it is clear that
\begin{align*}
2g_Y(1-g_Y)\, \mathcal{G}_{\mathrm{can}}(0, \infty)=2g_Y \log N+o(g_Y \log N) \,\ \text{as}\,\ N \to \infty,
\end{align*}
which completes the proof.
\end{proof}
\subsection*{Funding} Both authors gratefully acknowledge funding from the LOEWE research unit ``Uniformized structures in Arithmetic and Geometry" at the Technical University of Darmstadt and Goethe University of Frankfurt.

\subsection*{Data Availibility} 
Data sharing is not applicable to this article as no datasets were
generated or analysed during the current study.

\subsection*{Conflict of Interest Statement} Both authors declare that they have no affiliations with or involvement in any organization or entity with any financial or non-financial interest related to the subject matter or materials discussed in this manuscript.

\bibliographystyle{abbrv}
\bibliography{Biblist}
\end{document}